\documentclass[10pt,reqno]{amsart}
\usepackage[english]{babel}
\usepackage{latexsym}
\usepackage{amssymb,amsmath,dsfont}
\usepackage{mathrsfs}
\usepackage{amsthm}
\usepackage{graphics,graphicx,color,subfigure,float,color}
\usepackage{verbatim}
\usepackage{url}
\usepackage{array}
\usepackage[table]{xcolor}
\usepackage{amsthm}
\usepackage{a4}
\usepackage{fullpage}

\newtheorem{corollary}{Corollary}[section]

\newtheorem{lemma}[corollary]{Lemma}

\newtheorem{remark}[corollary]{Remark}
\newtheorem{theorem}[corollary]{Theorem}

\newtheorem*{ack}{Acknowledgments}
\numberwithin{equation}{section}
\newcommand{\N}{\mathbb{N}}
\newcommand{\R}{\mathbb{R}}
\begin{document}

\thispagestyle{empty}
\title[\tiny Interior feedback stabilization of wave equations with dynamic boundary delay]{Interior feedback stabilization of wave equations with dynamic boundary delay}
\author{Ka\"{\i}s Ammari}
\address{UR Analyse et Contr\^{o}le des Edp, UR 13ES64, D\'{e}partement de Math\'{e}matiques,
Facult\'{e} des Sciences de Monastir, Université de Monastir, 5019 Monastir, Tunisie}
\email{kais.ammari@fsm.rnu.tn}
\author{St\'{e}phane Gerbi}
\address{Laboratoire de Math\'{e}matiques UMR 5127 CNRS \& Universit\'{e} Savoie Mont-Blanc,
Campus scientifique,
73376 Le Bourget du Lac Cedex, France}
\email{stephane.gerbi@univ-smb.fr}
\date{} 
\thanks{
The authors wish to thank R\'{e}gion Rh\^one-Alpes for the financial support COOPERA, CMIRA 2013.
}
\begin{abstract}
In this paper we consider an interior stabilization problem for the wave equation with dynamic boundary delay.
We prove some stability results under the choice of damping operator. The proof of the main result is
based on a frequency domain method and combines a contradiction argument with the multiplier
technique to carry out a special analysis for the resolvent.
\end{abstract}

\subjclass[2010]{35B05, 93D15, 93D20}
\keywords{interior stabilization, dynamic boundary conditions, dynamic boundary delay, wave equations}
\maketitle
\tableofcontents

\vfill\break
\section{Introduction}

We study the interior stabilization of a wave equation in an open
bounded domain $\Omega$ of $\mathbb{R}^n, \, n \geq 2$. We denote by
$\partial\Omega$ the boundary of $\Omega$ and we assume that
$\partial \Omega = {\Gamma}_0 \cup {\Gamma}_1,$
where $\Gamma_0,\ \Gamma_1$ are closed subsets of $\partial \Omega$
with $\Gamma_0\cap
\Gamma_1=\emptyset$. Moreover we assume  $meas(\Gamma_0)>0.$ The
system is given by:
\begin{equation}
\left\{
\begin{array}{ll}
u_{tt}-\Delta u +a \, u_{t}=0, & x\in \Omega ,\ t>0 \,,\,\\[0.1cm]
u(x,t)=0, & x\in \Gamma _{0},\ t>0 \,,\,\\[0.1cm]
u_{tt}(x,t)=- \displaystyle\frac{\partial u}{\partial \nu }(x,t) - \mu u_{t}(x,t-\tau ) & x\in \Gamma_{1},\ t>0 \,,\,\\[0.1cm]
u(x,0)=u_{0}(x) & x\in \Omega \,,\,\\[0.1cm]
u_{t}(x,0)=u_{1}(x) & x\in \Omega \,,\, \\[0.1cm]
u_{t}(x,t-\tau)=f_{0}(x,t-\tau ) & x\in\Gamma_{1},\, t\in (0,\tau)\,,\,
\end{array}
\right.  \label{ondes}
\end{equation}
where $\nu$ stands for the unit normal vector of $\partial \Omega$ pointing towards the exterior of $\Omega$ and $\frac{\partial }{\partial\nu}$ is the normal
derivative.
Moreover, the constant $\tau >0$ is the time delay,
$a$ and $\mu$ are positive
numbers and the initial data $u_{0}\,,\,u_{1},~f_{0}$ are given functions belonging to suitable
spaces that will be precised later.

Let us first review some results for particular cases which seem to us
interesting.

In the absence of the delay term (i.e. $\tau =0$) problem (\ref{ondes})
becomes
\begin{equation}
\left\{
\begin{array}{ll}
u_{tt}-\Delta u +a \, u_{t}=0, & x\in \Omega ,\ t>0 \,,\,\\[0.1cm]
u(x,t)=0, & x\in \Gamma _{0},\ t>0 \,,\,\\[0.1cm]
u_{tt}(x,t)=- \displaystyle\frac{\partial u}{\partial \nu }(x,t) - \mu u_{t}(x,t) & x\in \Gamma_{1},\ t>0 \,,\,\\[0.1cm]
u(x,0)=u_{0}(x) & x\in \Omega \,,\,\\[0.1cm]
u_{t}(x,0)=u_{1}(x) & x\in \Omega \,,\, \\[0.1cm]
u_{t}(x,t-\tau)=f_{0}(x,t-\tau ) & x\in\Gamma_{1},\, t\in (0,\tau)\,,\,
\end{array}%
\right.  \label{onde_2}
\end{equation}

This type of problems arise (for example) in modelling of longitudinal
vibrations in a homogeneous bar in which there are viscous effects. The term
$a u_t$, indicates that the stress is proportional not only to the
strain, but also to the displacement rate (see \cite{CS76} for instance). From the
mathematical point of view, these problems do not neglect acceleration terms
on the boundary. Such type of boundary conditions are usually called \textit{%
dynamic boundary conditions}. They are not only important from the
theoretical point of view but also arise in several physical applications.
For instance in one space dimension, problem (\ref{onde_2}) can modelize the
dynamic evolution of a viscoelastic rod that is fixed at one end and has a
tip mass attached to its free end. The dynamic boundary conditions
represents the Newton's law for the attached mass (see \cite{BST64,AKS96,
CM98} for more details). In the two dimension space, as showed in \cite{G06}
and in the references therein, these boundary conditions arise when we
consider the transverse motion of a flexible membrane $\Omega $ whose
boundary may be affected by the vibrations only in a region. Also some
dynamic boundary conditions as in problem (\ref{onde_2}) appear when we
assume that $\Omega $ is an exterior domain of $\mathbb{R}^{3} $ in which
homogeneous fluid is at rest except for sound waves. Each point of the
boundary is subjected to small normal displacements into the obstacle (see
\cite{B76} for more details). This type of dynamic boundary conditions are
known as acoustic boundary conditions.

Well-posedness and longtime behavior for analogous equations as (\ref{ondes}%
) (without delay) on bounded domains have been investigated by many authors
in recent years (see, e.g.,  \cite{G94_1}, \cite{G94_2}, \cite{P08}, \cite{PS04}).

Among the early results dealing with this type of boundary conditions are
those of Grobbelaar-Van Dalsen \cite{G94_1,G94_2,G96} in which the author has made
contributions to this field.

In \cite{G94_1} the author introduced a model which describes the damped
longitudinal vibrations of a homogeneous flexible horizontal rod of length $%
L $ when the end $x = 0$ is rigidly fixed while the other end $x=L$ is free
to move with an attached load. This yields to a system of two second order
equations of the form
\begin{equation}  \label{Vad_1}
\left\{
\begin{array}{ll}
u_{tt}-u_{xx}-u_{txx}=0, & x\in (0,L),\,t>0,\\[0.1cm]
u(0,t)=u_{t}(0,t)=0, & t>0,\\[0.1cm]
u_{tt}(L,t)=-\left[ u_{x}+u_{tx}\right] (L,t), & t>0,\\[0.1cm]
u\left( x,0\right) =u_{0}\left( x\right) ,u_{t}\left( x,0\right)
=v_{0}\left( x\right) & x\in (0,L),\\
u\left( L,0\right) =\eta ,\qquad u_{t}\left( L,0\right) =\mu \ .&
\end{array}
\right.
\end{equation}
By rewriting problem (\ref{Vad_1}) within the framework of the abstract
theories of the so-called $B$-evolution theory, an existence of a unique
solution in the strong sense has been shown. An exponential decay result was
also proved in \cite{G96} for a problem related to (\ref{Vad_1}), which
describe the weakly damped vibrations of an extensible beam. See \cite{G96}
for more details.

Subsequently, Zang and Hu \cite{ZH07}, considered the problem
\begin{equation}  \label{Zang_Hu}
\left\{
\begin{array}{ll}
u_{tt}-p\left( u_{x}\right) _{xt}-q\left( u_{x}\right) _{x}=0, & x\in \left(0,1\right) ,\,t>0, \vspace{0.2cm} \\
u\left( 0,t\right) =0, & t\geq 0 \,,\,\vspace{0.2cm} \\
\left( p\left( u_{x}\right) _{t}+q\left( u_{x}\right) \left( 1,t\right)
+ku_{tt}\left( 1,t\right) \right) =0, & t\geq 0 \,,\,\vspace{0.2cm} \\
u\left( x,0\right) =u_{0}\left( x\right) ,\qquad u_{t}\left( x,0\right) =u_{1}\left( x\right) , & x\in \left( 0,1\right).
\end{array}%
\right.
\end{equation}
By using the Nakao inequality, and under appropriate conditions on $p$ and $%
q $, they established both exponential and polynomial decay rates for the
energy depending on the form of the terms $p$ and $q$.

Similarly, and
always in the absence of the delay term, Pellicer and Sol{à}-Morales \cite%
{PS04} considered the one dimensional problem of (\ref{ondes}) as an
alternative model for the classical spring-mass damper system, and by using
the dominant eigenvalues method, they proved that for small values of the
parameter $a$ the partial differential equations in problem (\ref{onde_2})
has the classical second order differential equation
\begin{equation*}
m_{1}u^{\prime \prime }(t)+d_{1}u^{\prime }(t)+k_{1}u(t)=0,
\end{equation*}%
as a limit, where the parameter $m_{1}\,,\,d_{1}\mbox{ and }k_{1}$ are
determined from the values of the spring-mass damper system. Thus, the
asymptotic stability of the model has been determined as a consequence of
this limit. But they did not obtain any rate of convergence. This result was
followed by recent works \cite{P08,PS08}. In particular in \cite{PS08}, the
authors considered a one dimensional nonlocal nonlinear strongly damped wave
equation with dynamical boundary conditions. In other words, they looked to
the following problem:
\begin{equation}
\left\{
\begin{array}{ll}
u_{tt}-u_{xx}-\alpha u_{txx}+\varepsilon f\left( u(1,t),\frac{u_{t}(1,t)}{%
\sqrt{\varepsilon }}\right) =0,\  &  \\[0.1cm]
u(0,t)=0, &  \\[0.1cm]
u_{tt}(1,t)=-\varepsilon \left[ u_{x}+\alpha u_{tx}+ru_{t}\right]
(1,t)-\varepsilon f\left( u(1,t),\frac{u_{t}(1,t)}{\sqrt{\varepsilon }}%
\right) , &
\end{array}%
\right.  \label{spring-mass}
\end{equation}
with $x\in (0,1),\,t>0,\,r,\alpha >0$ and $\varepsilon \geq 0$. The above
system models a spring-mass-damper system, where the term $\varepsilon
f\left( u(1,t),\frac{u_{t}(1,t)}{\sqrt{\varepsilon }}\right) $ represents a
control acceleration at $x=1$. By using the invariant manifold theory, the
authors proved that for small values of the parameter $\varepsilon $, the
solutions of (\ref{spring-mass}) are attracted to a two dimensional
invariant manifold. See \cite{PS08}, for further details.

The main difficulty of the problem considered is related to the non ordinary
boundary conditions defined on $\Gamma _{1}$. Very little attention has been
paid to this type of boundary conditions. We mention only a few particular
results in the one dimensional space \cite{GV99,PS04,DL02,K92}.

A related problem to (\ref{onde_2}) is the following:
\begin{eqnarray*}
u_{tt}-\Delta u+g(u_{t}) &=&f\hspace*{1.5cm}\text{ in }\Omega \times (0,T) \\
\frac{\partial u}{\partial \nu }+K(u)u_{tt}+h(u_{t}) &=&0\hspace*{1.5cm}%
\text{ on }\partial \Omega \times (0,T) \\
u(x,0) &=&u_{0}(x)\hspace*{1cm}\text{ in }\Omega \\
u_{t}(x,0) &=&u_{1}(x)\hspace*{1cm}\text{ in }\Omega
\end{eqnarray*}%
where the boundary term $h(u_{t})=|u_{t}|^{\rho }u_{t}$ arises when one
studies flows of gas in a channel with porous walls. The term $u_{tt}$ on
the boundary appears from the internal forces, and the nonlinearity $%
K(u)u_{tt}$ on the boundary represents the internal forces when the density
of the medium depends on the displacement. This problem has been studied in
\cite{DL02,DLS98}: by using the Fadeo-Galerkin approximations and a
compactness argument the authors proved the global existence and the
exponential decay of the solution of the problem.

We recall some results related to the interaction of an elastic medium with rigid mass. By using the classical semigroup theory, Littman and Markus
\cite{LM88} established a uniqueness result for a particular Euler-Bernoulli beam rigid body structure. They also proved the asymptotic stability of the
structure by using the feedback boundary damping. In \cite{LL98} the authors considered the Euler-Bernoulli beam equation which describes the dynamics of
clamped elastic beam in which one segment of the beam is made with viscoelastic material and the other of elastic material. By combining the
frequency domain method with the multiplier technique, they proved the exponential decay for the transversal motion but not for the longitudinal
motion of the model, when the Kelvin-Voigt damping is distributed only on a subinterval of the domain. In relation with this point, see also the work by
Chen et \textit{al.} \cite{CLL98} concerning the Euler-Bernoulli beam equation with the global or local Kelvin-Voigt damping. Also models of
vibrating strings with local viscoelasticity and Boltzmann damping, instead of the Kelvin-Voigt one, were considered in \cite{LL02} and an exponential
energy decay rate was established. Recently, Grobbelaar-Van Dalsen \cite{G03} considered an extensible thermo-elastic beam which is hanged at one end with
rigid body attached to its free end, i.e. one dimensional hybrid thermoelastic structure, and showed that the method used in \cite{O97} is
still valid to establish an uniform stabilization of the system. Concerning the controllability of the hybrid system we refer to the work by Castro and
Zuazua \cite{CZ98}, in which they considered flexible beams connected by point mass and the model takes account of the rotational inertia.

The purpose of this paper is to study problem (\ref{ondes}), in which a
delay term acted in the dynamic boundary conditions. In recent years one
very active area of mathematical control theory has been the investigation
of the delay effect in the stabilization of hyperbolic systems and many
authors have shown that delays can destabilize a system that is
asymptotically stable in the absence of delays (see \cite{amman,ammaniar,DLP,nicaisepignotti1,nicaisepignotti,nicaisevalein} for more
details).

As it has been proved by Datko \cite[Example 3.5]{Dat91}, systems of the
form
\begin{equation}  \label{stron_datko}
\left\{
\begin{array}{ll}
w_{tt}-w_{xx}-aw_{xxt}=0, & x\in(0,1),\,t>0, \vspace{0.3cm} \\
w\left( 0,t\right) =0,\qquad w_{x}\left( 1,t\right) =-kw_{t}\left( 1,t-\tau
\right), & t>0,%
\end{array}%
\right.
\end{equation}
where $a,\,k$ and $\tau$ are positive constants become unstable for an
arbitrarily small values of $\tau$ and any values of $a$ and $k$. In (\ref%
{stron_datko}) and even in the presence of the strong damping $-aw_{xxt}$,
without any other damping, the overall structure can be unstable. This was
one of the main motivations for considering problem (\ref{ondes})( of
course the structure of problem (\ref{ondes}) and (\ref{stron_datko}) are
different due to the nature of the boundary conditions in each problem).

Subsequently, Datko et \emph{al} \cite{DLP} treated the following one
dimensional problem:
\begin{equation}
\left\{
\begin{array}{ll}
u_{tt}(x,t)-u_{xx}(x,t)+2au_{t}(x,t)+a^{2}u(x,t)=0, & 0<x<1,\,\ t>0,\vspace{%
0.3cm} \\
u(0,t)=0, & t>0,\vspace{0.3cm} \\
u_{x}(1,t)=-ku_t(1,t-\tau ), & t>0,%
\end{array}%
\right.  \label{Dakto_system}
\end{equation}%
which models the vibrations of a string clamped at one end and free at the
other end, where $u(x,t)$ is the displacement of the string. Also, the
string is controlled by a boundary control force (with a delay) at the free
end. They showed that, if the positive constants $a$ and $k$ satisfy
\begin{equation*}
k\frac{e^{2a}+1}{e^{2a}-1}<1,
\end{equation*}%
then the delayed feedback system (\ref{Dakto_system}) is stable for all
sufficiently small delays. On the other hand if
\begin{equation*}
k\frac{e^{2a}+1}{e^{2a}-1}>1,
\end{equation*}
then there exists a dense open set $D$ in $(0,\infty)$ such that for each $%
\tau\in D$, system (\ref{Dakto_system}) admits exponentially unstable
solutions.

It is well known that in the absence of delay in (\ref{Dakto_system}), that
is for $\tau =0$, system (\ref{Dakto_system}) is uniformly asymptotically
stable under the condition $a^{2}+k^{2}>0$ and the total energy of the
solution satisfies for all $t>0,$
\begin{equation}
E(t,u)=\int_{0}^{1}(u_{t}^{2}+u_{x}^{2}+a^{2}u^{2})dx\leq CE\left(
0,u\right) e^{-\alpha t}  \label{energy_decay}
\end{equation}%
for some positive constant $\alpha.$ See \cite{C79} for more details.

Recently, Ammari et \emph{al} \cite{ampignic} have treated the $N-$dimensional
wave equation%
\begin{equation}
\left\{
\begin{array}{ll}
u_{tt}(x,t)-\Delta u(x,t)+au_{t}(x,t-\tau )=0, & x\in \Omega ,\,t>0,\vspace{%
0.3cm} \\
u(x,t)=0, & x\in \Gamma _{0},\,t>0,\vspace{0.3cm} \\
\dfrac{\partial u}{\partial \nu }(x,t)=-ku(x,t), & x\in \Gamma _{1},\,t>0,%
\vspace{0.3cm} \\
u(x,0)=u_{0}(x) & x\in \Omega \,,\,\\[0.1cm]
u_{t}(x,0)=u_{1}(x) & x\in \Omega \,,\, \\[0.1cm]
u_{t}(x,t-\tau)=f_{0}(x,t-\tau ) & x\in\Gamma_{1},\, t\in (0,\tau)\,,\,
\end{array}%
\right.  \label{Ammari_al_problem}
\end{equation}%
where $\Omega $ is an open bounded domain of $\mathbb{R}^{N},\,N\geq 2$ with
boundary $\partial \Omega =\Gamma _{0}\cup \Gamma _{1}$ and $\Gamma _{0}\cap
\Gamma _{1}=\emptyset $. Under the usual geometric condition on the domain $%
\Omega $, they showed an exponential stability result, provided that the
delay coefficient $a$ is sufficiently small.

In \cite{nicaisepignotti1} the authors examined a system of wave equation with a linear
boundary damping term with a delay. Namely, they looked to the following
system
\begin{equation}
\left\{
\begin{array}{ll}
u_{tt}-\Delta u=0, & x\in \Omega ,\ t>0 \,,\,\\[0.1cm]
u(x,t)=0, & x\in \Gamma _{0},\ t>0 \,,\,\\[0.1cm]
\displaystyle \frac{\partial u}{\partial \nu }(x,t)=\mu_{1}u_{t}(x,t)+\mu_{2}u_{t}(x,t-\tau ) & x\in \Gamma _{1},\ t>0 \,,\,\\[0.1cm]
u(x,0)=u_{0}(x), & x\in \Omega \,,\, \\[0.1cm]
u_{t}(x,0)=u_{1}(x) & x\in \Omega\,,\, \\[0.1cm]
u_{t}(x,t-\tau)=g_{0}(x,t-\tau) & x\in \Omega, \tau\in(0,1)\,,\,%
\end{array}
\right.  \label{delay_1}
\end{equation}
and proved under the assumption
\begin{equation}
\mu _{2}<\mu _{1}  \label{coeff}
\end{equation}%
(which means that the weight of the feedback with delay is smaller than the
one without delay) that null stationary state is exponentially stable. On
the contrary, if (\ref{coeff}) does not hold, they found a sequence of
delays for which the corresponding solution of (\ref{delay_1}) will be
unstable. The main approach used in \cite{nicaisepignotti1}, is an observability
inequality obtained with a Carleman estimate. 

The case of time-varying delay (i.e. $\tau=\tau(t)$ is a function depending
on $t$) has been studied by Nicaise, Valein and Fridman \cite{NVF09} in one
space dimension. In their work, an exponential stability result was given
under the condition:
\begin{equation}  \label{1_d}
\mu_{2}<\sqrt{1-d} \; \mu_{1},
\end{equation}%
where $d$ is a constant such that
\begin{equation}  \label{condition_d}
\tau^{\prime }(t)\leq d<1,\,\forall t>0.
\end{equation}

Delay effects arise in many applications and practical problems and
it is well-known that an arbitrarily small  delay may destabilize a
system which is uniformly asymptotically stable in absence of delay
(see e.g. \cite{Lag1,Lag2,Datko,DLP,Z90}).

The stability of
\eqref{ondes} with $\tau=0, a=0$ has been studied in \cite{amdenis}
where it has been shown that the system is stable under some geometric condition on $\Gamma_1$ (as in \cite{blr}).
Moreover, if $\mu=0,$  that is in absence of delay, the above problem for any $a>0$ is exponentially stable even. On the contrary, in presence of a delay term there are instability phenomena probably, as in \cite{nicaisepignotti1}.
 
Let us also cite the recent work of Ammari and Nicaise, \cite{AN-2015}, in which the authors performed a complete study of the stabilisation of elastic systems by
collocated feedback with or without delay.
 
In this paper the idea is to contrast the effect of the time delay by using the dissipative feedback (i.e., by giving the control in the
feedback form $a \, u_t(x,t)$ or $-a \Delta u_t(x,t), \ \ \ x\in \Omega,\ \ t> 0$). 

In the next section, we will show the global existence of problem \eqref{ondes} by transforming the delay term and by using a semigroup approach.
The natural question is then the stability of problem \eqref{ondes}. This is the goal of section 3. We will show that a ``shifted''  problem is asymptotically stable with a polynomial
decay rate and we cannot answer the question of the stability of problem \eqref{ondes}. In fact, in the last section, numerical experiments in 1D shows that
under certain conditions, problem \eqref{ondes} is unstable.

To stabilize problem \eqref{ondes}, we will see that a Kelvin-Voigt damping is efficient. This is done in section 4.

Lastly, we will conduct some numerical examples in 1D to illustrate these stability or instability results.
\section{Well-posedness of problem (\protect\ref{ondes})}
In this section we will first transform the delay
boundary conditions by adding a new unknown. Then we will use a semigroup approach and the Lumer-Phillips' theorem to prove the existence and uniqueness
of the solution of the problem (\ref{ondes}).

We point out that the well-posedness in evolution equations with delay is not always obtained. Recently, Dreher, Quintilla and
Racke have shown some ill-posedness results for a wide range of evolution equations with a delay term \cite{DQR09}.  

\subsection{Setup and notations}

We present here some material that we shall use in order to prove the local
existence of the solution of problem (\ref{ondes}). We denote
\begin{equation*}
H_{\Gamma_{0}}^{1}(\Omega) =\left\{u \in H^1(\Omega) /\ u_{\Gamma_{0}} =
0\right\} .
\end{equation*}
 
By $( .,.) $ we denote the scalar product in $L^{2}( \Omega)$ i.e. $(u,v)(t)
= \displaystyle \int_{\Omega} u(x,t) v(x,t) dx$. Also we mean by $\Vert
.\Vert_{q}$ the $L^{q}(\Omega) $ norm for $1 \leq q \leq \infty$, and by $%
\Vert .\Vert_{q,\Gamma_{1}}$ the $L^{q}(\Gamma_{1}) $ norm.

Let $T>0$ be a real number and $X$ a Banach space endowed with norm $\Vert
.\Vert _{X}$.

$L^{p}(0,T;X) ,\ 1 \leq p < \infty$ denotes the space of
functions $f$ which are $L^{p}$ over $\left( 0,T\right) $ with values in $X$%
, which are measurable and $\Vert f \Vert_{X} \in L^{p} \left(0,T\right)$.
This space is a Banach space endowed with the norm
\begin{equation*}
\Vert f\Vert_{L^{p}\left( 0,T;X\right) }= \left(\int_{0}^{T}\Vert
f\Vert_{X}^{p} dt\right)^{1/p} .
\end{equation*}
$L^{\infty}\left( 0,T;X\right) $ denotes the space of functions $f:\left]0,T%
\right[ \rightarrow X$ which are measurable and $\Vert f\Vert_{X}\in
L^{\infty }\left( 0,T\right) $. This space is a Banach space endowed with
the norm:
\begin{equation*}
\Vert f\Vert_{L^{\infty}(0,T;X)}=\mbox{ess}\sup_{0<t<T}\Vert f\Vert_{X} \; .
\end{equation*}
We recall that if $X$ and $Y$ are two Banach spaces such that $%
X\hookrightarrow Y$ (continuous embedding), then
\begin{equation*}
L^{p}\left( 0,T;X\right) \hookrightarrow L^{p}\left( 0,T;Y\right) , \ 1 \leq
p\leq \infty \;.
\end{equation*}


\subsection{Semigroup formulation of the problem}

In this section, we will prove the global existence and the uniqueness of
the solution of problem (\ref{ondes}). We will first transform the problem (%
\ref{ondes}) to the problem (\ref{wave2}) by making the change of variables (%
\ref{change-variable}), and then we use the semigroup approach to prove the
existence of the unique solution of problem (\ref{wave2}).

To overcome the problem of the boundary delay, we introduce the new variable:
\begin{equation}
z\left( x,\rho ,t\right) =u_{t}\left( x,t-\tau \rho \right) ,\ x\in \Gamma
_{1},\ \rho \in \left( 0,1\right) ,\ t>0.  \label{change-variable}
\end{equation}%
Then, we have
\begin{equation}
\tau z_{t}\left( x,\rho ,t\right) +z_{\rho }\left( x,\rho ,t\right) =0,~%
\text{in }\Gamma _{1}\times \left( 0,1\right) \times \left( 0,+\infty
\right) .  \label{equation-z}
\end{equation}%
Therefore, problem (\ref{ondes}) is equivalent to:
\begin{equation}
\left\{
\begin{array}{ll}
u_{tt}-\Delta u + a \, u_{t} =0, & x\in \Omega ,\ t>0 \,,\,\\[0.1cm]
u(x,t)=0, & x\in \Gamma _{0},\ t>0 \,,\,\\[0.1cm]
u_{tt}(x,t)=-  \displaystyle\frac{\partial u}{\partial \nu }(x,t) - \mu z(x,1,t), & x\in \Gamma_{1} ,\ t>0 \,,\,\\[0.1cm]
\tau z_{t}(x,\rho ,t)+z_{\rho }(x,\rho ,t)=0, & x\in \Gamma _{1},\rho \in(0,1)\,,\,t>0 \,,\\[0.1cm]
z(x,0,t)=u_{t}(x,t) & x\in \Gamma _{1},\ t>0 \,,\\[0.1cm]
u(x,0)=u_{0}(x) & x\in \Omega \,,\\
u_{t}(x,0)=u_{1}(x) & x\in \Omega \,,\\
z(x,\rho ,0)=f_{0}(x,-\tau \rho ) & x\in \Gamma _{1},\,\rho \in (0,1) \ .
\end{array}%
\right. 
 \label{wave2}
\end{equation}
 
The first natural question is the existence of solutions of the problem (\ref{wave2}).
In this section we will give a sufficient condition that guarantees that this problem is well-posed.

For this purpose we will use a semigroup formulation of the
initial-boundary value problem (\ref{wave2}). 
\textcolor{black}{
If we denote $V:=\left( u,u_{t},\gamma _{1}(u_{t}),z\right)^{T}$, we define the energy space:
\begin{equation*}
\mathscr{H}=H_{\Gamma _{0}}^{1}(\Omega )\times L^{2}\left( \Omega \right) \times L^{2}(\Gamma _{1})\times L^{2}(\Gamma_{1} \times (0,1)).
\end{equation*}%
Clearly, $\mathscr{H}$ is a Hilbert space with respect to the inner product%
\begin{equation}\label{energynorm}
\left\langle V_{1},V_{2}\right\rangle _{\mathscr{H}}=\int_{\Omega }\nabla
u_{1}.\nabla u_{2}dx+\int_{\Omega }v_{1}v_{2}dx+\int_{\Gamma _{1}}w_{1}w_{2} d\sigma +\xi \int_{\Gamma _{1}}\int_{0}^{1}z_{1}z_{2}d\rho d\sigma
\end{equation}%
for $V_{1}=(u_{1},v_{1},w_{1},z_{1})^{T}$, $V_{2}=(u_{2},v_{2},w_{2},z_{2})^{T}$ and $\xi >0$ a nonnegative real number defined later.\\
Therefore, if $V_{0} \in \mathscr{H} \mbox{ and } V \in \mathscr{H}$, the problem (\ref{wave2}) is formally 
equivalent to the following abstract evolution equation in the Hilbert space $\mathscr{H}$:
}
\begin{equation}
\left\{
\begin{array}{ll}
V'(t)=  \mathscr{A}V(t) , & t>0, \vspace{0.1cm}\\
V\left( 0\right) =V_{0}, &
\end{array}
\right.  \label{Matrix_problem}
\end{equation}
where $'$ denotes the derivative with respect to time $t$, $V_{0}:=\left( u_{0},u_{1},\gamma_{1}(u_{1}),f_{0}(.,-.\tau)\right)^{T}$
and the operator $\mathscr{A}$ is defined by:
\begin{equation*}
\mathscr{A}\left(
\begin{array}{c}
u\vspace{0.2cm} \\
v\vspace{0.2cm} \\
w\vspace{0.2cm} \\
z
\end{array}
\right) =\left(
\begin{array}{c}
\displaystyle v\vspace{0.2cm} \\
\displaystyle \Delta u- a \, v\vspace{0.2cm} \\
\displaystyle -\frac{\partial u}{\partial \nu }-\mu z\left( .,1\right) \vspace{0.2cm}\\
\displaystyle -\frac{1}{\tau }z_{\rho }
\end{array}%
\right).
\end{equation*}%

The domain of $\mathscr{A}$ is the set of $V=(u,v,w,z)^{T}$ such that:
\begin{eqnarray}
(u,v,w,z)^{T}\in \left(H_{\Gamma_{0}}^{1} (\Omega)\cap H^{2}(\Omega)\right)\times H^1_{\Gamma_0}(\Omega)\times L^{2}(\Gamma_{1})\times L^{2}\left(\Gamma_{1};H^{1}(0,1)\right) ,&& \label{domainA1}\vspace*{0.3cm} \\\label{domainA2}
w=\gamma_{1}(v) = \,z(.,0) \text{ on }\Gamma_{1}.&& \label{domainA3}
\end{eqnarray}
Let us finally define $\xi^{\star} = \displaystyle \mu \tau$. For all $\displaystyle \xi > \xi^{\star}, \mbox{ we also define } \mu_{1} = \frac{\xi}{2\tau} + \frac{\mu}{2}
\mbox{ and }\mathscr{A}_{d} = \mathscr{A} - \mu_{1} \, I$.

The well-posedness of problem (\ref{wave2}) is ensured by:
\begin{theorem}\label{existence_u} Let $V_{0}\in \mathscr{H}$, then there
exists a unique solution $V\in C\left( \mathbb{R}_{+};\mathscr{H}%
\right) $ of problem (\ref{Matrix_problem}). Moreover, if $V_{0}\in %
\mathscr{D}\left( \mathscr{A}\right) $, then
\begin{equation*}
V\in C\left( \mathbb{R}_{+};\mathscr{D}\left( \mathscr{A}\right) \right)
\cap C^{1}\left( \mathbb{R}_{+};\mathscr{H}\right) .
\end{equation*}
\end{theorem}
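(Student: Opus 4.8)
The plan is to apply the Lumer--Phillips theorem to the shifted operator $\mathscr{A}_{d}=\mathscr{A}-\mu_{1}I$, to show that it is m-dissipative, and then to recover $\mathscr{A}$ as a bounded perturbation of $\mathscr{A}_{d}$. First I would note that $\mathscr{D}(\mathscr{A})$ is dense in $\mathscr{H}$, which is immediate since it contains the smooth functions vanishing near $\Gamma_{0}$.

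For dissipativity, I would compute $\left\langle \mathscr{A}V,V\right\rangle_{\mathscr{H}}$ for $V=(u,v,w,z)^{T}\in\mathscr{D}(\mathscr{A})$ using Green's formula. The terms $\int_{\Omega}\nabla v\cdot\nabla u$ and $\int_{\Omega}\Delta u\,v$ combine, leaving a boundary integral $\int_{\Gamma_{1}}\frac{\partial u}{\partial\nu}w\,d\sigma$ (here $v=0$ on $\Gamma_{0}$ and $w=\gamma_{1}(v)$), which cancels exactly against the contribution of the third component. Integrating the transport term by parts in $\rho$ and using $z(\cdot,0)=w$ yields
\begin{equation*}
\left\langle \mathscr{A}V,V\right\rangle_{\mathscr{H}}=-a\|v\|_{2}^{2}-\mu\int_{\Gamma_{1}}z(\cdot,1)\,w\,d\sigma-\frac{\xi}{2\tau}\int_{\Gamma_{1}}z(\cdot,1)^{2}\,d\sigma+\frac{\xi}{2\tau}\int_{\Gamma_{1}}w^{2}\,d\sigma.
\end{equation*}
Bounding the cross term by Young's inequality $\mu\,|z(\cdot,1)\,w|\le\frac{\mu}{2}z(\cdot,1)^{2}+\frac{\mu}{2}w^{2}$ and subtracting $\mu_{1}\|V\|_{\mathscr{H}}^{2}$ with $\mu_{1}=\frac{\xi}{2\tau}+\frac{\mu}{2}$, the boundary $w^{2}$ terms cancel and the coefficient of $\int_{\Gamma_{1}}z(\cdot,1)^{2}$ becomes $\frac{\mu}{2}-\frac{\xi}{2\tau}\le 0$ precisely because $\xi>\xi^{\star}=\mu\tau$. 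Hence $\left\langle \mathscr{A}_{d}V,V\right\rangle_{\mathscr{H}}\le 0$, so $\mathscr{A}_{d}$ is dissipative.

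The main work is the maximality: showing that $\lambda I-\mathscr{A}_{d}=(\lambda+\mu_{1})I-\mathscr{A}$ is surjective, for which it suffices to solve $(\beta I-\mathscr{A})V=F$ for one $\beta>0$ and every $F=(f_{1},f_{2},f_{3},f_{4})^{T}\in\mathscr{H}$. From the first line $v=\beta u-f_{1}$, while the transport equation $\beta z+\frac{1}{\tau}z_{\rho}=f_{4}$ with $z(\cdot,0)=w=\gamma_{1}(v)$ is a first-order linear ODE in $\rho$, solved explicitly by $z(x,\rho)=w(x)e^{-\beta\tau\rho}+\tau e^{-\beta\tau\rho}\int_{0}^{\rho}e^{\beta\tau s}f_{4}(x,s)\,ds$; in particular $z(\cdot,1)$ is an affine function of $\gamma_{1}(u)$ plus known data. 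Substituting into the remaining equations reduces the system to the elliptic problem $-\Delta u+(\beta^{2}+a\beta)u=g$ in $\Omega$, with $u=0$ on $\Gamma_{0}$ and a Robin-type condition $\frac{\partial u}{\partial\nu}+(\beta^{2}+\mu\beta e^{-\beta\tau})\gamma_{1}(u)=h$ on $\Gamma_{1}$. I would cast this in weak form on $H^{1}_{\Gamma_{0}}(\Omega)$; the associated bilinear form has all coefficients strictly positive, hence it is continuous and coercive, and Lax--Milgram yields a unique $u$. Elliptic regularity promotes $u$ to $H^{2}(\Omega)$, and reconstructing $v,w,z$ gives $V\in\mathscr{D}(\mathscr{A})$ with $(\beta I-\mathscr{A})V=F$. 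This establishes that $\mathscr{A}_{d}$ is m-dissipative.

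The hardest step is this solvability argument, where care is needed to eliminate $z$ and $w$ correctly so that the boundary bilinear form stays coercive. Once $\mathscr{A}_{d}$ is m-dissipative, Lumer--Phillips shows it generates a $C_{0}$-semigroup of contractions on $\mathscr{H}$; since $\mu_{1}I$ is bounded, the bounded perturbation theorem shows $\mathscr{A}=\mathscr{A}_{d}+\mu_{1}I$ generates a $C_{0}$-semigroup. This yields the unique mild solution $V\in C(\mathbb{R}_{+};\mathscr{H})$ for $V_{0}\in\mathscr{H}$ and the classical solution $V\in C(\mathbb{R}_{+};\mathscr{D}(\mathscr{A}))\cap C^{1}(\mathbb{R}_{+};\mathscr{H})$ when $V_{0}\in\mathscr{D}(\mathscr{A})$.
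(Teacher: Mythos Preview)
Your proposal is correct and follows essentially the same route as the paper: show $\mathscr{A}_{d}=\mathscr{A}-\mu_{1}I$ is dissipative via Green's formula, the $\rho$-integration, and Young's inequality, then prove surjectivity of $\lambda I-\mathscr{A}$ by eliminating $v$ and $z$ and solving the resulting coercive Robin problem for $u$ via Lax--Milgram and elliptic regularity, finally invoking Lumer--Phillips and the bounded perturbation $\mu_{1}I$. The only cosmetic difference is that you subtract the full $\mu_{1}\|V\|_{\mathscr{H}}^{2}$ (which is cleaner, since the extra negative terms in $\|\nabla u\|^{2}$, $\|v\|^{2}$, and $\|z\|^{2}$ only help), whereas the paper more implicitly uses $\mu_{1}\int_{\Gamma_{1}}w^{2}\le\mu_{1}\|V\|_{\mathscr{H}}^{2}$ to pass from the boundary term to the shifted operator.
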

\begin{proof}
To prove Theorem \ref{existence_u}, we first prove that 
there exists a unique solution $V\in C\left( \mathbb{R}_{+};\mathscr{H}\right) $ of  the shifted problem:
\begin{equation}
\left\{
\begin{array}{ll}
V'(t)=  \mathscr{A}_{d}V(t) , & t>0, \vspace{0.1cm}\\
V\left( 0\right) =V_{0}, &
\end{array}
\right.  \label{Matrix_problem_Ad}
\end{equation}
Then as $\mathscr{A} = \mathscr{A}_{d} + \mu_{1} \, I$, there will exist $V\in C\left( \mathbb{R}_{+};\mathscr{H}\right) $ solution of problem (\ref{Matrix_problem}).

In order to prove the existence and uniqueness of the solution of problem \eqref{Matrix_problem_Ad} we use the
semigroup approach and the Lumer-Phillips' theorem.  

Indeed, let $V=(u,v,w,z)^{T}\in \mathscr{D}\left(\mathscr{A}\right) $. By definition of the operator $\mathscr{A}$ and
the scalar product of $\mathscr{H}$, we have:
\begin{equation*}
\begin{array}{ll}
\displaystyle \left\langle \mathscr{A}V,V\right\rangle_{\mathscr{H}} =& \displaystyle \int_{\Omega}\nabla u.\nabla v dx + \int_{\Omega }v \Delta u dx  -  \int_{\Omega }a |v(x)|^{2} dx  \\
\displaystyle &+ \displaystyle \int_{\Gamma_{1}}w\left( -\frac{\partial u}{\partial \nu }-\mu z\left(\sigma,1\right)
\right) d\sigma -\dfrac{\xi }{\tau }\int_{\Gamma _{1}}\int_{0}^{1}zz_{\rho}d\rho d\sigma . \
\end{array}
\end{equation*}
By Green's formula we  obtain: 
\begin{equation}\label{dissipative1}
\left\langle \mathscr{A}V,V\right\rangle_{\mathscr{H}} = -  \int_{\Omega }a |v(x)|^{2} dx -\mu\int_{\Gamma_{1}}z\left(\sigma,1\right) wd\sigma -\frac{\xi }{\tau }\int_{\Gamma _{1}}\int_{0}^{1}z_{\rho}zd\rho dx. 
\end{equation}
But we have:
\begin{eqnarray}
\frac{\xi }{\tau }\int_{\Gamma _{1}}\int_{0}^{1}z_{\rho }z(\sigma,\rho) \,
d\rho \, d\sigma &=&\frac{\xi }{2\tau}\int_{\Gamma _{1}} \int_{0}^{1}\frac{%
\partial }{\partial \rho }z^{2}(\sigma,\rho) \, d\rho \, d\sigma  \notag
\\
&=&\frac{\xi }{2\tau}\int_{\Gamma_{1}}\left(
z^{2}(\sigma,1,t)-z^{2}(\sigma,0) \right) d\sigma \;.
\label{Energy_second_equation}
\end{eqnarray}
Thus from the compatibility condition (\ref{domainA3}), we get: 
\begin{eqnarray*}
- \, \frac{\xi }{\tau }\int_{\Gamma _{1}}\int_{0}^{1}z_{\rho }z \,d\rho \, d\sigma &=&\frac{\xi }{2\tau}\int_{\Gamma_{1}}\left(
v^{2} -z^{2}(\sigma,1,t) \right) d\sigma \ .
\end{eqnarray*}
Therefore equation \eqref{dissipative1} becomes:
\begin{equation}\label{dissipative2}
\begin{array}{ll}
\displaystyle \left\langle \mathscr{A}V,V\right\rangle_{\mathscr{H}} &=\displaystyle -  \int_{\Omega }a |v(x)|^{2} dx 
-\frac{\xi }{2 \tau }\int_{\Gamma _{1}}\int_{0}^{1}z^{2}(\sigma,1,t) d\sigma
\displaystyle +\frac{\xi }{2 \tau }\int_{\Gamma _{1}} |v|^{2}(\sigma) d\sigma \\
~&\displaystyle -\mu\int_{\Gamma_{1}}z\left(\sigma,1\right) wd\sigma   \ .
\end{array}
\end{equation}
To treat the last term in the preceding equation,Young's inequality gives: 
\begin{equation}\label{youngkv}
-\displaystyle \int_{\Gamma_{1}}v(\sigma) z\left(\sigma,1\right) d\sigma
\leq \displaystyle  \frac{1}{2} \int_{\Gamma_{1}}z^{2}\left(\sigma,1\right) d\sigma+  \displaystyle  \frac{1}{2} \int_{\Gamma_{1}} v^{2}(\sigma,t) d\sigma \ .
\end{equation}
Therefore, we firstly get:
\begin{equation}\label{dissipative3}
\displaystyle \left\langle \mathscr{A}V,V\right\rangle_{\mathscr{H}} +\displaystyle \int_{\Omega }a |v(x)|^{2} dx 
-\left(\frac{\xi }{2 \tau} +\frac{\mu}{2}\right)\int_{\Gamma _{1}}|v(\sigma)|^{2} d\sigma
 + \displaystyle \left(\frac{\xi }{2 \tau} - \frac{\mu}{2}\right)\int_{\Gamma _{1}}z^{2}(\sigma,1) d\sigma  \leq 0 \ ,
\end{equation}
which writes
\begin{equation*}
\displaystyle \left\langle \mathscr{A}V,V\right\rangle_{\mathscr{H}} +\displaystyle \int_{\Omega }a |v(x)|^{2} dx 
-\mu_{1}\int_{\Gamma _{1}}|v(\sigma)|^{2} d\sigma
 + \displaystyle \left(\frac{\xi }{2 \tau} - \frac{\mu}{2}\right)\int_{\Gamma _{1}}z^{2}(\sigma,1) d\sigma  \leq 0 \ .
\end{equation*}

From the preceding inequality, we get:
\begin{equation}\label{dissipativeAd}
\displaystyle \left\langle \Big(\mathscr{A} - \mu_{1} \, I\Big)V,V\right\rangle_{\mathscr{H}} \leq -\displaystyle \int_{\Omega }a |v(x)|^{2} dx 
- \displaystyle \left(\frac{\xi }{2 \tau} - \frac{\mu}{2}\right)\int_{\Gamma _{1}}z^{2}(\sigma,1) d\sigma \ .
\end{equation}
As $\forall \xi > \xi^{\star} \,,\,  \displaystyle \left(\frac{\xi }{2 \tau} - \frac{\mu}{2}\right) >0$, we finally get:
\begin{equation}\label{dissipative4}
\displaystyle \left\langle \Big(\mathscr{A} - \mu_{1} \, I\Big)V,V\right\rangle_{\mathscr{H}} \leq 0 \ .
\end{equation}
Thus the operator $\mathscr{A}_{d} = \mathscr{A} - \mu_{1} \, I $ is dissipative.

Now we  want to show that  $\forall \lambda >0 \,,\, \forall \xi > \xi^{\star}\,,\, \lambda I -\mathscr{A}_{d}$ is surjective. To prove that, it is clear that it suffices to show that
$\lambda I -\mathscr{A}$ is surjective for all $\lambda > 0$.

For $ F=(f_{1},f_{2},f_{3},f_{4})^{T}\in \mathscr{H}$, let $V=(u,v,w,z)^{T}\in \mathscr{D}\left( \mathscr{A}\right) $
solution of
\begin{equation*}
\left( \lambda I-\mathscr{A}\right) V=F,
\end{equation*}%
which is:
\begin{eqnarray}
\lambda u - v &=& f_{1},\vspace{0.2cm}  \label{eqsurj1}\\
\lambda v -  \Delta u + a v  &=& f_{2}, \label{eqsurj2} \\
\lambda w + \frac{\partial u}{\partial \nu } + \mu z(.,1) &=& f_{3}, \label{eqsurj3} \\
\lambda z + \frac{1}{\tau} z_{\rho}  &=& f_{4}.\label{eqsurj4} \
\end{eqnarray}%

To find $V=(u,v,w,z)^{T}\in \mathscr{D}\left( \mathscr{A}\right) $ solution of the system (\ref{eqsurj1}), (\ref{eqsurj2}), (\ref{eqsurj3}) and (\ref{eqsurj4}), we suppose $u$ is determined with the appropriate regularity. Then from  (\ref{eqsurj1}), we get:
\begin{equation}
v=\lambda u-f_{1} \ .  \label{solution_v}
\end{equation}
Therefore, from the compatibility condition on $\Gamma_{1}$, (\ref{domainA3}), we determine $z(.,0)$ by:
\begin{equation}
z(x,0) = v( x) =\lambda u(x) -f_{1}(x) , \ \mbox{\ for } \, x \in \Gamma_{1}.
\label{z_solution}
\end{equation}
Thus, from  (\ref{eqsurj4}), $z$ is the solution of the linear Cauchy problem:
\begin{equation}\label{diffz}
\left\{
\begin{array}{ll}
z_{\rho} = \tau\Big(f_{4}(x) - \lambda z(x,\rho) \Big), & \mbox{ for } x \in \Gamma_{1} \,,\, \rho \in (0,1), \\
z(x,0)  =\lambda u(x) -f_{1}(x).&
\end{array}
\right.
\end{equation}
The solution of the Cauchy problem (\ref{diffz}) is given by:
\begin{equation}\label{z_formula}
z(x,\rho) = \lambda u( x) e^{-\lambda \rho \tau} -f_{1}e^{-\lambda \rho \tau }+\tau e^{-\lambda \rho \tau}\int_{0}^{\rho}f_{4}(x,\sigma) e^{\lambda \sigma \tau }d\sigma
\quad \mbox{ for } x \in \Gamma_{1} \,,\, \rho \in (0,1) .
\end{equation}
So, we have at the point $\rho = 1$,
\begin{equation} \label{z1}
z(x,1) = \lambda u(x)  e^{-\lambda \tau} + z_{1}(x), \quad \mbox{ for } x \in \Gamma_{1}
\end{equation}
with
$$
z_{1}(x) = -f_{1}e^{-\lambda \tau }+\tau e^{-\lambda  \tau}\int_{0}^{1}f_{4}(x,\sigma) e^{\lambda \sigma \tau }d\sigma,
\quad \mbox{ for } x \in \Gamma_{1} .
$$
Since $f_{1} \in H_{\Gamma_{0}}^{1}(\Omega) \mbox{ and } f_{4} \in L^{2}(\Gamma_{1}) \times L^{2}(0,1)$, then $z_{1} \in L^{2}(\Gamma_{1})$.

Consequently, knowing  $u$, we may deduce $v$ by  (\ref{solution_v}), $z$ by (\ref{z_formula}) and using (\ref{z1}), we deduce $w = \gamma_{1}(v)$ by (\ref{eqsurj3}).

From  equations (\ref{eqsurj2}) and (\ref{eqsurj3}), $u$ must satisfy:
\begin{equation} \label{equbar}
\lambda (\lambda + a) u - \Delta u = f_{2} +  (\lambda + a)  f_{1}, \quad \mbox{in } \Omega
\end{equation}
with the boundary conditions
\begin{eqnarray}
u = 0, & \mbox{ on } & \Gamma_{0} \label{ubar0} \\
\frac{\partial u}{\partial \nu} = f_{3} - \lambda z(.,0) - \mu z(.,1), &\mbox{on } & \Gamma_{1}. \label{ubar1}
\end{eqnarray}
Using the preceding expression of $z(.,1)$ and the expression of $v$ given by (\ref{solution_v}), we have:
\begin{equation} \label{dudnu}
\frac{\partial u}{\partial \nu} = - \, \left(\lambda^2 + \mu \lambda e^{-\lambda \tau} \right) u + f(x), \quad \mbox{ for  } x \in \Gamma_{1}
\end{equation}
with
$$
f(x) = f_{3}(x) + \lambda f_{1}(x)  - \mu z_{1}(x),  \quad \mbox{ for  } x \in \Gamma_{1} \ .
$$
From the regularity of $f_{3} \,,\, f_{2} \,,\, z_{1}$, we get $f \in L^{2}(\Gamma_{1})$.

The variational formulation of problem (\ref{equbar}), (\ref{ubar0}),(\ref{dudnu}) is to find  $u \in H_{\Gamma_{0}}^{1}(\Omega)$ such that:
\begin{eqnarray}
\int_{\Omega} \lambda (\lambda + a) u \omega + \nabla u \nabla \omega dx &+ &
\int_{\Gamma_{1}} \left(\lambda^2 + \mu \lambda e^{-\lambda \tau}\right) u(\sigma)  \omega(\sigma) d \sigma, \label{varia}\\
&=&
\int_{\Omega}  \left(f_{2} +  (\lambda + a) f_{1}\right) \omega  dx + \int_{\Gamma_{1}} f(\sigma) \omega(\sigma) d \sigma,
\notag
\end{eqnarray}
for any $\omega \in H_{\Gamma_{0}}^{1}(\Omega)$. Since $\lambda >0,\, \mu >0 $, the left hand side of (\ref{varia}) defines a coercive bilinear form on $H_{\Gamma_{0}}^{1}(\Omega)$.
Thus by applying the Lax-Milgram theorem, there exists a unique $u \in H_{\Gamma_{0}}^{1}(\Omega)$ solution of (\ref{varia}). Now, choosing
$\omega \in \mathscr{C}_{c}^{\infty}$, $u$ is a solution of (\ref{equbar}) in the sense of distribution and therefore $u \in H^2(\Omega) \cap H^1_{\Gamma_0}(\Omega)$.
Thus using the Green's formula and exploiting the equation (\ref{equbar}) on $\Omega$, we obtain finally:
$$
\int_{\Gamma_{1}} \left(\lambda^2 + \mu \lambda e^{-\lambda \tau}\right) u(\sigma)  \omega(\sigma) d \sigma  +
 \left\langle\frac{\partial u}{\partial \nu};\omega \right\rangle_{\Gamma_{1}} = \int_{\Gamma_{1}} f(\sigma) \omega(\sigma) d \sigma \ \forall \omega \in H_{\Gamma_{0}}^{1}(\Omega) \ .
$$
So $u \in H^2(\Omega) \cap H^1_{\Gamma_0}(\Omega)$ verifies (\ref{dudnu}) and we recover $u$ and $v$ and thus by (\ref{z_formula}), we obtain $z$ and
finally setting $w = \gamma_{1}(v)$, we have found  $V=(u,v,w,z)^{T}\in \mathscr{D}\left( \mathscr{A}\right) $
solution of $
\left( I -\mathscr{A}\right) V=F $.

Thus from the Lumer-Phillips' theorem, there exists a unique solution $V\in C\left( \mathbb{R}_{+};\mathscr{H}\right) $ of  the shifted problem \eqref{Matrix_problem_Ad}.
This completes the proof of Theorem \ref{existence_u}.
\end{proof}

\begin{remark} \rm \label{opdamp}
According to the above the operator $\mathscr{A}_d$ generates a $C_0$ semigroup of contractions $e^{t\mathscr{A}_d}$ on $\mathscr{H}$.
\end{remark}


\section{Asymptotic behavior}
In this section, we show that if $\xi> \xi^*$, the semigroup $e^{t\mathscr{A}_d}$ decays to the null steady
state with a polynomial decay rate for regular initial data. To obtain this, our technique is based on a frequency domain method and combines a contradiction argument with the multiplier technique to carry out a special analysis for the resolvent. 

\begin{theorem} \label{lr}
Let $\xi > \xi^*$.  Then there exists a constant $C >0$ such that, for all $V_0 \in\mathscr{D}(\mathscr{A}_d)$, the semigroup $e^{t\mathscr{A}_d}$ satisfies the following estimate
\begin{equation}\label{EXPDECEXP3nb}
\left\|e^{t \mathscr{A}_d} V_0\right\|_{\mathscr{H}} \le  \frac{C}{\sqrt{t}} \, \left\Vert V_0 \right\Vert_{\mathscr{D}({\mathscr{A}}_d)},
\forall \; t > 0.
\end{equation}
\end{theorem}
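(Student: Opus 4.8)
The plan is to deduce \eqref{EXPDECEXP3nb} from a resolvent estimate via the Borichev--Tomilov characterization of polynomial stability. Since $\mathscr{A}_d$ generates a bounded $C_0$-semigroup of contractions (Remark \ref{opdamp}), a decay rate of $t^{-1/2}$ for $\mathscr{D}(\mathscr{A}_d)$-data follows once one establishes the two facts
$$ i\mathbb{R}\subset\rho(\mathscr{A}_d)\qquad\text{and}\qquad \limsup_{|s|\to\infty}\frac{1}{|s|^{2}}\big\|(is-\mathscr{A}_d)^{-1}\big\|_{\mathscr{L}(\mathscr{H})}<\infty. $$
So the whole proof reduces to these two statements. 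Throughout I write $V=(u,v,w,z)^{T}$ and note that, since $\mathscr{A}_d=\mathscr{A}-\mu_1 I$, solving $(is-\mathscr{A}_d)V=F$ is exactly the system \eqref{eqsurj1}--\eqref{eqsurj4} with spectral parameter $\lambda=\mu_1+is$; in particular $\operatorname{Re}\lambda=\mu_1>0$, which is the decisive gain provided by the shift.

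First I would settle $i\mathbb{R}\subset\rho(\mathscr{A}_d)$. Injectivity of $is-\mathscr{A}_d$ follows from dissipativity: if $\mathscr{A}_d V=isV$, taking the real part of $\langle\mathscr{A}_d V,V\rangle_{\mathscr{H}}=is\|V\|_{\mathscr{H}}^{2}$ and invoking \eqref{dissipativeAd} forces $v=0$ in $\Omega$ and $z(\cdot,1)=0$ on $\Gamma_1$; feeding this back through \eqref{eqsurj1}--\eqref{eqsurj4} (with $F=0$) and the transport equation then yields $V=0$. For surjectivity I would reuse the elliptic reduction of the existence proof, so that $u$ solves \eqref{equbar}--\eqref{dudnu} with $\lambda=\mu_1+is$. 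The sesquilinear form in \eqref{varia} is no longer coercive for large $|s|$ (its diagonal real part contains $-s^{2}$), but its principal part $\int_\Omega\nabla u\cdot\nabla\omega\,dx$ is coercive on $H^1_{\Gamma_0}(\Omega)$ (as $\Gamma_0$ has positive measure), while the remaining zero- and boundary-order terms are compact perturbations by the compactness of $H^1_{\Gamma_0}(\Omega)\hookrightarrow L^2(\Omega)$ and of the trace into $L^2(\Gamma_1)$; the Fredholm alternative together with the injectivity just shown then gives invertibility.

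The core is the resolvent bound, which I would prove by contradiction. Suppose it fails: there are $s_n\in\mathbb{R}$, $|s_n|\to\infty$, and $V_n=(u_n,v_n,w_n,z_n)^{T}\in\mathscr{D}(\mathscr{A}_d)$ with $\|V_n\|_{\mathscr{H}}=1$ such that $F_n:=(is_n-\mathscr{A}_d)V_n$ satisfies $|s_n|^{2}\|F_n\|_{\mathscr{H}}\to0$. Taking $\operatorname{Re}\langle F_n,V_n\rangle_{\mathscr{H}}$ and using \eqref{dissipativeAd} gives at once the damping estimates
$$ \|v_n\|_{L^2(\Omega)}\longrightarrow0,\qquad \|z_n(\cdot,1)\|_{L^2(\Gamma_1)}\longrightarrow0 . $$
The interior damping thus makes the velocity vanish immediately; the genuine work is to recover the boundary data. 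Writing $\lambda=\mu_1+is_n$ and using the explicit transport solution \eqref{z_formula}--\eqref{z1}, one has $z_n(\cdot,1)=w_n e^{-\lambda\tau}+I_n$ with $\|I_n\|_{L^2(\Gamma_1)}\lesssim\|f_{4,n}\|\to0$; since $|e^{-\lambda\tau}|=e^{-\mu_1\tau}$ is a fixed positive constant (exactly where $\mu_1>0$ is used), it follows that $\|w_n\|_{L^2(\Gamma_1)}\to0$, and then \eqref{z_formula} gives $\|z_n\|_{L^2(\Gamma_1\times(0,1))}\to0$ too.

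It remains to control the elastic energy $\|\nabla u_n\|_{L^2(\Omega)}$, and this is the step I expect to be the main obstacle. The idea is to apply the multiplier technique to the reduced Helmholtz equation \eqref{equbar}, namely $-\Delta u_n+\lambda(\lambda+a)u_n=g_n$ with $g_n=f_{2,n}+(\lambda+a)f_{1,n}$. Multiplying by $\overline{u_n}$ and integrating by parts produces
$$ \|\nabla u_n\|_{L^2(\Omega)}^{2}=\operatorname{Re}\!\int_\Omega g_n\overline{u_n}\,dx-\operatorname{Re}\Big(\lambda(\lambda+a)\|u_n\|_{L^2(\Omega)}^{2}\Big)+\operatorname{Re}\!\int_{\Gamma_1}\frac{\partial u_n}{\partial\nu}\,\overline{u_n}\,d\sigma , $$
and the difficulty is to force each right-hand term to $0$ \emph{uniformly in the growing frequency} $s_n$. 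Using $\lambda u_n=v_n+f_{1,n}\to0$ the middle term is $O(\|v_n+f_{1,n}\|^{2})\to0$ and $\|u_n\|_{L^2}=O(|s_n|^{-1})$; combined with $\|g_n\|\lesssim|s_n|\|f_{1,n}\|+\|f_{2,n}\|$ this kills the first term, and it is here that the prescribed rate $\|F_n\|=o(|s_n|^{-2})$ keeps $g_n$ under control. The boundary integral is the delicate point: substituting $\partial_\nu u_n=f_{3,n}-\lambda w_n-\mu z_n(\cdot,1)$ from \eqref{eqsurj3} and $\gamma_1(u_n)=\lambda^{-1}(w_n+\gamma_1 f_{1,n})$, the dangerous factor $\lambda w_n$ pairs with $\overline{\gamma_1(u_n)}$ to give the modulus-one phase $\lambda/\overline{\lambda}$, so this term is $O\!\big(\|w_n\|(\|w_n\|+\|\gamma_1 f_{1,n}\|)\big)\to0$, and the remaining contributions are controlled by $\|z_n(\cdot,1)\|$ and $\|f_{3,n}\|$. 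Hence $\|\nabla u_n\|_{L^2(\Omega)}\to0$, so all four components of $\|V_n\|_{\mathscr{H}}^{2}$ tend to $0$, contradicting $\|V_n\|_{\mathscr{H}}=1$. This yields the resolvent estimate, and Borichev--Tomilov gives \eqref{EXPDECEXP3nb}.
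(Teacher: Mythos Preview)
Your argument follows the paper's overall strategy: invoke the Borichev--Tomilov theorem after verifying $i\mathbb{R}\subset\rho(\mathscr{A}_d)$ and the resolvent bound $\|(is-\mathscr{A}_d)^{-1}\|=O(|s|^{2})$, the latter by a contradiction argument launched from the dissipativity estimate \eqref{dissipativeAd}. Two sub-steps are handled differently. For the surjectivity of $is-\mathscr{A}_d$ you appeal to the Fredholm alternative (coercive principal part plus compact lower-order and boundary terms), whereas the paper multiplies the variational equation by $-i\beta+\mu_{1}$, which renders the full sesquilinear form coercive and allows a direct Lax--Milgram argument. In the contradiction step you first extract $w_n\to0$ from the transport identity $z_n(\cdot,1)=w_n\,e^{-(\mu_1+is_n)\tau}+o(1)$ together with $z_n(\cdot,1)\to0$, and only afterwards recover $\|\nabla u_n\|\to0$ by pairing the reduced Helmholtz equation with $\overline{u_n}$; the paper reverses this order, first asserting $u_n\to0$ in $H^1_{\Gamma_0}(\Omega)$ from $u_n\to0$ and $\Delta u_n\to0$ in $L^2(\Omega)$, and then deducing $w_n\to0$ via \eqref{1.14b} and the trace theorem. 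Your ordering is arguably cleaner here, since the transport argument needs no elliptic input and the subsequent elastic-energy identity already has all boundary quantities under control; the paper's passage from $L^2$ convergence of $u_n$ and $\Delta u_n$ to $H^1$ convergence is stated without elaboration.
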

\begin{remark} \rm \label{stabAd}
Let us notice that although the semigroup $e^{t\mathscr{A}_d}$ generates a polynomial stability, we cannot conclude on the stability of the semigroup $e^{t\mathscr{A}}$. 

Indeed let us consider $V_{0} \in \mathscr{D}({\mathscr{A}})$ and $\widetilde{V}_{0} = V_{0} - \mu_{1} V_{0}$, and the two following problems:
\begin{equation*}
\left\{
\begin{array}{lll}
V' &= \mathscr{A} V \\
V_{t=0} &= V_{0}
\end{array}
\right.
and \quad
\left\{
\begin{array}{ll}
\widetilde{V}' &= \mathscr{A}_{d} \widetilde{V} \\
\widetilde{V}_{t=0} &= \widetilde{V}_{0} \ .
\end{array}
\right.
\end{equation*}
Given $V_{0}=(u_{0},u_{1},\gamma_{1}(u_{1}),z_{0})^{T} \in \mathscr{D}({\mathscr{A}})$, the second problem writes in term of \linebreak
$V~=~(u,u_{t},\gamma_{1}(u_{t}),z)^{T}$:
\begin{equation}
\left\{
\begin{array}{ll}
u_{tt}-\displaystyle\frac{1}{1+\mu_{1}} \Delta u + \displaystyle\frac{a+\mu_{1}}{1+\mu_{1}} \, u_{t}=0, & x\in \Omega ,\ t>0 \,,\,\\[0.1cm]
u(x,t)=0, & x\in \Gamma _{0},\ t>0 \,,\,\\[0.1cm]
u_{tt}(x,t)=-  \displaystyle\frac{\partial u}{\partial \nu }(x,t) - \mu z(x,1,t)- \mu u_{t}(x,t), & x\in \Gamma_{1} ,\ t>0 \,,\,\\[0.1cm]
\tau z_{t}(x,\rho ,t)+z_{\rho }(x,\rho ,t)+\mu_{1} \tau z(x,\rho,t)=0, & x\in \Gamma _{1},\rho \in(0,1)\,,\,t>0 \,,\\[0.1cm]
z(x,0,t)=u_{t}(x,t) & x\in \Gamma _{1},\ t>0 \,,\\[0.1cm]
u(x,0)=(1-\mu_{1})u_{0}(x) & x\in \Omega \,,\\
u_{t}(x,0)=(1-\mu_{1})u_{1}(x) & x\in \Omega \,,\\
z(x,\rho ,0)=(1-\mu_{1})f_{0}(x,-\tau \rho ) & x\in \Gamma _{1},\,\rho \in (0,1) \ .
\end{array}%
\right. 
 \label{wave2d}
\end{equation}
We call this problem the ``shifted''  problem.

By Duhamel's formula, we get:
\begin{equation}\label{VandVtilde}
\forall t > 0 \,,\, e^{-\mu_{1}t} V(t) = \frac{-e^{-\mu_{1}t} }{1 + \mu_{1}}V_{0} + \frac{\widetilde{V}(t)}{1 + \mu_{1}} -  \frac{\mu_{1}}{1 + \mu_{1}} \int_{0}^{t}e^{-\mu_{1}(t-s)} \ \widetilde{V}(s) ds \ .
\end{equation}
The first two terms of the right hand side of equation \eqref{VandVtilde} tends to zero as $t$ tends to infinity. So we obtain:
\begin{equation*}
V(t) \simeq \int_{0}^{t}e^{\mu_{1} s} \ \widetilde{V}(s) ds  \ .
\end{equation*}
We only know at this stage that $\Vert \widetilde{V}(s)\Vert_{\mathscr{D}({\mathscr{A}}_d)}$ tends to zero at least as $s^{-1/2}$, and thus $\Vert V(t)\Vert_{\mathscr{D}(\mathscr{A})}$
may tend to zero or blow-up in infinite time. We will illustrate this behavior by numerical examples in 1D in the last section of this work.
\end{remark}
\begin{proof}[Proof of theorem  \ref{lr}]
We will use the following frequency domain theorem for polynomial stability from \cite{borichev} (see also \cite{batty,rao} for weaker variants) of a $C_0$ semigroup of contractions on a Hilbert space:

\begin{lemma}
\label{lemrao}
A $C_0$ semigroup $e^{t{\mathcal L}}$ of contractions on a Hilbert space ${\mathcal H}$ satisfies 
$$||e^{t{\mathcal L}}U_0||_{{\mathcal H}} \leq \frac{C}{t^{\frac{1}{\theta}}} ||U_0||_{{\mathcal D}({\mathcal L})}$$
for some constant $C >0$ and for $\theta>0$ if and only if
\begin{equation}
\rho ({\mathcal L})\supset \bigr\{i \beta \bigm|\beta \in \R \bigr\} \equiv i \R, \label{1.8w} \end{equation}
and \begin{equation}\limsup_{|\beta |\to \infty } \frac{1}{\beta^\theta} \, \|(i\beta I -{\mathcal L})^{-1}\|_{{\mathcal L}({\mathcal H})} <\infty, \label{1.9} 
\end{equation}
where $\rho({\mathcal L})$ denotes the resolvent set of the operator 
${\mathcal L}$.
\end{lemma}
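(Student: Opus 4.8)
The plan is to prove the two implications separately, exploiting throughout that $i\R\subset\rho(\mathcal{L})$ forces $0\in\rho(\mathcal{L})$, so that $\mathcal{L}^{-1}\in\mathcal{L}(\mathcal{H})$ and the graph norm $\|\cdot\|_{\mathcal{D}(\mathcal{L})}$ is equivalent to $\|\mathcal{L}\cdot\|$. Writing $U_0=\mathcal{L}^{-1}x$, the decay estimate is then equivalent to the operator bound $\|e^{t\mathcal{L}}\mathcal{L}^{-1}\|_{\mathcal{L}(\mathcal{H})}\le C\,t^{-1/\theta}$, so the whole statement becomes a dictionary between the polynomial growth of $\beta\mapsto(i\beta-\mathcal{L})^{-1}$ and the polynomial decay of $t\mapsto e^{t\mathcal{L}}\mathcal{L}^{-1}$. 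The decisive structural fact is that $\mathcal{H}$ is a Hilbert space: this makes Plancherel's theorem available for $\mathcal{H}$-valued functions, which is precisely what upgrades a one-sided implication (all that holds in a general Banach space) to an equivalence.

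For the easy direction I would assume the decay and first deduce \eqref{1.8w}. Boundedness of the semigroup together with $\|e^{t\mathcal{L}}\mathcal{L}^{-1}\|\to 0$ excludes boundary spectrum: since the boundary spectrum of the generator of a bounded semigroup consists of approximate eigenvalues, a point $i\beta_0\in\sigma(\mathcal{L})\cap i\R$ would produce almost-invariant unit vectors and contradict the decay, giving $i\R\subset\rho(\mathcal{L})$. For the quantitative bound \eqref{1.9} I would combine the Laplace representation $(i\beta-\mathcal{L})^{-1}\mathcal{L}^{-1}x=\int_0^\infty e^{-i\beta t}e^{t\mathcal{L}}\mathcal{L}^{-1}x\,dt$, obtained by letting $\operatorname{Re}\lambda\downarrow 0$, with the decay estimate and the uniform boundedness of the semigroup; splitting the time integral at the scale $|\beta|^{-1}$ and using the resolvent identity to pass from $(i\beta-\mathcal{L})^{-1}\mathcal{L}^{-1}$ back to $(i\beta-\mathcal{L})^{-1}$, the rate $t^{-1/\theta}$ translates into $\|(i\beta-\mathcal{L})^{-1}\|=O(|\beta|^{\theta})$. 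This is the routine direction.

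The hard direction — \eqref{1.8w}–\eqref{1.9} imply the decay — is the substance of the theorem. The natural approach is to fix $x\in\mathcal{H}$, set $g(t)=e^{t\mathcal{L}}\mathcal{L}^{-1}x$ for $t\ge 0$ and $g(t)=0$ for $t<0$, observe that the Fourier transform of $e^{-\varepsilon t}g$ is $(\varepsilon+i\beta-\mathcal{L})^{-1}\mathcal{L}^{-1}x$, and pass to the boundary $\varepsilon\downarrow 0$ using \eqref{1.9}; Plancherel then identifies the $L^2(\R;\mathcal{H})$ norm of the orbit with the $L^2$ norm of $\beta\mapsto(i\beta-\mathcal{L})^{-1}\mathcal{L}^{-1}x$. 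To reach the sharp rate one works with fractional powers $(-\mathcal{L})^{-\gamma}$, whose range gains decay in $\beta$ against the resolvent, establishes an intermediate bound $\|e^{t\mathcal{L}}(-\mathcal{L})^{-\gamma}\|=O(t^{-\gamma/\theta})$, and then interpolates (moment inequalities for fractional powers) between $\gamma=0$ and the chosen $\gamma$ down to the endpoint $\gamma=1$, i.e. $\|e^{t\mathcal{L}}\mathcal{L}^{-1}\|=O(t^{-1/\theta})$. Contractivity enters here because $t\mapsto\|e^{t\mathcal{L}}(-\mathcal{L})^{-\gamma}x\|$ is nonincreasing, which lets a Chebyshev/averaging argument upgrade an integral-in-time bound to a pointwise rate.

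The main obstacle is this hard direction, and within it the frequency balancing. A naive global Plancherel estimate does not close: the $|\beta|^{\theta}$ growth of the resolvent overwhelms the decay gained from any single fractional power, so that $\int_\R\|(i\beta-\mathcal{L})^{-1}(-\mathcal{L})^{-\gamma}x\|^2\,d\beta$ diverges. The resolution requires a frequency-localized (dyadic in $|\beta|$) decomposition paired with the contractivity of the semigroup, optimized so that the resulting exponent is exactly $1/\theta$ rather than a lossy $1/\theta-\epsilon$; together with the justification that the $L^2$ boundary trace of the half-plane-holomorphic resolvent is genuinely $(i\beta-\mathcal{L})^{-1}$, this quantitatively sharp analysis is precisely the content of \cite{borichev}, which we invoke rather than reprove.
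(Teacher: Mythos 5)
The paper gives no proof of this lemma at all: it is quoted verbatim as a known result from Borichev--Tomilov \cite{borichev} (with \cite{batty,rao} cited as weaker variants), and since your proposal, after sketching the standard ideas, also ends by invoking \cite{borichev} for the decisive hard direction, you are in substance taking the same route. (Two details of your sketch of the ``routine'' direction are shaky --- the Laplace integral $\int_0^\infty e^{-i\beta t}e^{t\mathcal{L}}\mathcal{L}^{-1}x\,dt$ is not absolutely convergent when $\theta\ge 1$, and the correct truncation scale for the time integral is $T\sim|\beta|^{\theta}$ rather than $|\beta|^{-1}$, with the tail controlled via the decay applied to $(i\beta-\mathcal{L})^{-1}x\in\mathcal{D}(\mathcal{L})$ --- but nothing in the paper rests on these points, since the lemma is simply being quoted.)
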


\begin{remark} \rm \label{Non-Compactness}
In view of this theorem we need to identify the spectrum of $\mathscr{A}_d$ lying on the imaginary axis.
Unfortunately, as the embedding of $L^{2}\left(\Gamma_{1},H^{1}(0,1)\right)$ into $L^{2}\left(\Gamma_{1} \times (0,1)\right) = L^{2}\left(\Gamma_{1} \times L^{2}(0,1)\right)$ is not compact,
$\mathscr{A}_{d}$ has not a compact resolvent. Therefore its spectrum $\sigma(\mathscr{A}_{d})$ does not consist only of eigenvalues of $\mathscr{A}_{d}$. 
We have then to show that :
\begin{enumerate}
\item if $\beta$ is a real number, then $i\beta I - {\mathscr  A}_d$ is injective and
\item if $\beta$ is a real number, then $i\beta I - {\mathscr  A}_d$ is surjective.
\end{enumerate}
It is the objective of the two following lemmas.
\end{remark}
First we look at the point spectrum of $\mathscr{A}_d$.
\begin{lemma}\label{pointspectrum}
If $\beta$ is a real number, then $i\beta$  is not an eigenvalue of $\mathscr{A}_d$.
\end{lemma}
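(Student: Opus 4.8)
The plan is to argue by contradiction. Suppose that for some real $\beta$ there exists a nonzero $V=(u,v,w,z)^{T}\in\mathscr{D}(\mathscr{A}_d)$ with $\mathscr{A}_d V = i\beta V$, i.e. $\mathscr{A}V = (i\beta+\mu_1)V$. The first step is to exploit the dissipativity estimate \eqref{dissipativeAd}: taking the real part of $\langle \mathscr{A}_d V,V\rangle_{\mathscr{H}} = i\beta\|V\|^2_{\mathscr{H}}$ gives $\mathrm{Re}\langle\mathscr{A}_d V,V\rangle_{\mathscr{H}}=0$, and since the right-hand side of \eqref{dissipativeAd} is a sum of two nonpositive terms (using $\xi>\xi^\star$ so that $\frac{\xi}{2\tau}-\frac{\mu}{2}>0$), each must vanish. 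This immediately forces
\[
a\int_{\Omega}|v|^2\,dx = 0 \qquad\text{and}\qquad \int_{\Gamma_1}z^2(\sigma,1)\,d\sigma = 0.
\]

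From here I would extract the consequences for each component. Writing out the eigenvalue system $\mathscr{A}V=(i\beta+\mu_1)V$ componentwise yields $v=(i\beta+\mu_1)u$, the Helmholtz-type equation $\Delta u - av = (i\beta+\mu_1)v$ on $\Omega$, the boundary relation $-\frac{\partial u}{\partial\nu}-\mu z(\cdot,1)=(i\beta+\mu_1)w$ on $\Gamma_1$, and the transport equation $-\frac{1}{\tau}z_\rho=(i\beta+\mu_1)z$, whose solution is $z(\sigma,\rho)=z(\sigma,0)e^{-(i\beta+\mu_1)\tau\rho}$. The vanishing $z(\cdot,1)=0$ together with this exponential formula forces $z(\sigma,0)=0$, hence $z\equiv 0$ on $\Gamma_1\times(0,1)$; by the compatibility condition \eqref{domainA3}, $z(\cdot,0)=w=\gamma_1(v)$, so the trace of $v$ on $\Gamma_1$ vanishes. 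The main case analysis is on the parameter $a$: if $a>0$, the first vanishing relation gives $v\equiv 0$ on $\Omega$, hence $u\equiv 0$ (as $i\beta+\mu_1\neq 0$ because $\mu_1>0$), a contradiction.

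The delicate case is $a=0$ (or, after absorbing, when the interior damping term drops out). Then the dissipation only controls the boundary terms, and I would reduce to an overdetermined elliptic eigenvalue problem: $u$ satisfies $-\Delta u = \lambda^2 u$ on $\Omega$ with $\lambda=i\beta+\mu_1$, $u=0$ on $\Gamma_0$, and on $\Gamma_1$ both the trace $\gamma_1(v)=\lambda\gamma_1(u)=0$ and (from $w=0$ feeding into the boundary equation) $\frac{\partial u}{\partial\nu}=0$. Thus $u$ has vanishing Cauchy data $u=\frac{\partial u}{\partial\nu}=0$ on $\Gamma_1$ while solving an elliptic equation. I expect this to be the main obstacle: concluding $u\equiv 0$ requires a unique continuation argument (Holmgren's theorem, or a geometric/multiplier identity), possibly under the geometric condition on $\Gamma_1$ alluded to in the introduction. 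I would invoke unique continuation for the Helmholtz equation to kill $u$ on a neighborhood of $\Gamma_1$ and propagate to all of $\Omega$, contradicting $V\neq 0$. The remaining components then vanish by the relations above, completing the contradiction and establishing that $i\beta$ is not an eigenvalue.
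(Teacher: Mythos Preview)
Your proof for the case $a>0$ is correct and follows essentially the same route as the paper: take the real part of $\langle \mathscr{A}_d V,V\rangle_{\mathscr{H}}=i\beta\|V\|^2$, invoke the dissipativity estimate \eqref{dissipativeAd} to force $v=0$ and $z(\cdot,1)=0$, then read off $u=0$, $w=0$, and $z\equiv 0$ from the componentwise relations. The only difference is cosmetic: the paper uses $v=0\Rightarrow w=\gamma_1(v)=0\Rightarrow z(\cdot,0)=0$ and then the transport formula to kill $z$, whereas you first kill $z$ via $z(\cdot,1)=0$ and the exponential representation; both orderings are equally valid.

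Your discussion of the ``delicate case $a=0$'' is unnecessary here: throughout the paper $a$ is a fixed \emph{positive} constant (see the introduction, where it is stated that ``$a$ and $\mu$ are positive numbers''), so the interior damping never drops out and no unique continuation argument is required. You can safely delete that paragraph.
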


\begin{proof} We
will show that the equation 
\begin{equation}\mathscr{A}_d Z = i \beta Z 
\label{eigenAd} 
\end{equation}
with $Z= (u,v,w,z)^T \in \mathscr{D}(\mathscr{A}_d)$ and $\beta \in \R$ has only
the trivial solution.\\
Equation \eqref{eigenAd} writes :
\begin{eqnarray}
(i\beta + \mu_1) u - v &=& 0,\vspace{0.2cm}  \label{eigenAd1}\\
(i \beta + \mu_1) v -  \Delta u + a v  &=& 0, \label{eigenAd2} \\
(i \beta + \mu_1) w + \frac{\partial u}{\partial \nu } + \mu z(.,1) &=& 0, \label{eigenAd3} \\
(i\beta + \mu_1) z + \frac{1}{\tau} z_{\rho}  &=& 0 \ .\label{eigenAd4} \
\end{eqnarray}%

\noindent By taking the inner product of (\ref{eigenAd}) with $Z$ and using \eqref{dissipativeAd}, we get:
\begin{equation}\label{1.7}
\Re
\left(<\mathscr{A}_d Z,Z>_{{\mathscr{H}}} \right)
\leq - \int_\Omega a \left|v(x)\right|^2 \, dx - \left(\frac{\xi}{2\tau} - \frac{\mu}{2}\right) \, \int_{\Gamma_1} \left|z(\sigma,1) \right|^2 \, d\sigma. 
\end{equation}
Thus we firstly obtain that: 
$$v=0 \; \hbox{and} \; z(.,1) = 0.$$ Next, according to \eqref{eigenAd1}, 
we have $v = \left(i\beta + \mu_{1} \right) \, u$. \\
Thus we have $u=0$; since  $w~=~\gamma_1(v)~=~z(.,0)$, we obtain also 
$w = 0$ and $z(.,0)= 0$. Moreover as $z$ satisfies \eqref{eigenAd4}  by integration, we obtain:
$$
z(.,\rho) = z(.,0) \, e^{- \tau (i\beta + \mu_1)\rho}.
$$
But as $z(.,0)= 0$, we finally have $z = 0$.

Thus the only solution of \eqref{eigenAd} is the trivial one.
\end{proof}

Next, we show that $\mathscr{A}_d$ has no continuous spectrum on the imaginary axis.
\begin{lemma}\label{resolventiR}
Let $\xi > \xi^*$. If $\beta$ is a real number, then $i\beta$ belongs to the resolvent set $\rho(\mathscr{A}_d)$ of $\mathscr{A}_d$.
\end{lemma}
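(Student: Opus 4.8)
The plan is to prove that $i\beta I-\mathscr{A}_d$ is surjective; since Lemma~\ref{pointspectrum} already provides injectivity and $\mathscr{A}_d$ is closed (being the generator of the contraction semigroup of Remark~\ref{opdamp}), the closed graph theorem will then furnish a bounded inverse, giving $i\beta\in\rho(\mathscr{A}_d)$. Given $F=(f_1,f_2,f_3,f_4)^T\in\mathscr{H}$ I would seek $Z=(u,v,w,z)^T\in\mathscr{D}(\mathscr{A}_d)$ with $(i\beta I-\mathscr{A}_d)Z=F$. Writing $\mathscr{A}_d=\mathscr{A}-\mu_1 I$, this is exactly $(\lambda I-\mathscr{A})Z=F$ with the \emph{complex} parameter $\lambda:=i\beta+\mu_1$, whose real part $\mu_1=\frac{\xi}{2\tau}+\frac{\mu}{2}$ is strictly positive because $\xi>\xi^\star$.

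First I would repeat verbatim the elimination carried out in the proof of Theorem~\ref{existence_u}: formula \eqref{solution_v} gives $v=\lambda u-f_1$, the compatibility condition fixes $z(.,0)$, and integrating the transport equation \eqref{eqsurj4} yields $z$ through \eqref{z_formula} and its trace through \eqref{z1}. All these formulas remain valid for complex $\lambda$ with $\mathrm{Re}\,\lambda>0$, and one still has $z_1\in L^2(\Gamma_1)$. This reduces the whole system to the single elliptic boundary value problem \eqref{equbar}--\eqref{dudnu} for $u$, with variational formulation as in \eqref{varia}: find $u\in H^1_{\Gamma_0}(\Omega)$ such that
$$a(u,\omega):=\int_\Omega \lambda(\lambda+a)\,u\bar\omega+\nabla u\cdot\nabla\bar\omega\,dx+\int_{\Gamma_1}\bigl(\lambda^2+\mu\lambda e^{-\lambda\tau}\bigr)u\bar\omega\,d\sigma=L(\omega)$$
for all $\omega\in H^1_{\Gamma_0}(\Omega)$, where $L$ is the antilinear functional built from $f_1,f_2$ and the datum $f$ of \eqref{dudnu}.

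The main obstacle is that, contrary to the real case $\lambda>0$ of Theorem~\ref{existence_u}, the form $a$ is \emph{no longer coercive}: for $\beta$ large one has $\mathrm{Re}\bigl(\lambda(\lambda+a)\bigr)=\mu_1(\mu_1+a)-\beta^2<0$, so Lax--Milgram cannot be applied directly. I would circumvent this by a Fredholm argument on the reduced $u$-problem, which is legitimate precisely because, \emph{after} the explicit elimination of $z$, the non-compact component $L^2(\Gamma_1;H^1(0,1))$ responsible for $\mathscr{A}_d$ having no compact resolvent (Remark~\ref{Non-Compactness}) has disappeared. Concretely I would split $a=a_0-K$, where $a_0(u,\omega)=\int_\Omega \nabla u\cdot\nabla\bar\omega+u\bar\omega\,dx$ is bounded and coercive on $H^1_{\Gamma_0}(\Omega)$, while $K$ gathers the remaining volume term $\int_\Omega(\lambda(\lambda+a)-1)u\bar\omega\,dx$ and the boundary term $\int_{\Gamma_1}\bigl(\lambda^2+\mu\lambda e^{-\lambda\tau}\bigr)u\bar\omega\,d\sigma$. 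By Rellich's theorem the embedding $H^1(\Omega)\hookrightarrow L^2(\Omega)$ is compact, and the trace map $H^1(\Omega)\to L^2(\Gamma_1)$ is compact as well (it factors through $H^{1/2}(\Gamma_1)\hookrightarrow L^2(\Gamma_1)$); hence, through the Riesz isomorphism attached to the coercive form $a_0$, the form $K$ corresponds to a compact operator, and $a(u,\omega)=L(\omega)$ takes the form $(I-T)u=g$ with $T$ compact.

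By the Fredholm alternative this equation is solvable for every right-hand side if and only if its homogeneous version admits only the trivial solution. But a nonzero solution of the homogeneous reduced problem ($F=0$) would, through the bijective reconstruction $u\mapsto(u,v,w,z)$, produce a nonzero $Z\in\mathscr{D}(\mathscr{A}_d)$ with $\mathscr{A}_d Z=i\beta Z$, contradicting Lemma~\ref{pointspectrum}. Thus the kernel is trivial, the Fredholm alternative yields a unique $u\in H^1_{\Gamma_0}(\Omega)$, elliptic regularity upgrades it to $u\in H^2(\Omega)\cap H^1_{\Gamma_0}(\Omega)$ exactly as in Theorem~\ref{existence_u}, and reconstructing $v$, $z$ and $w=\gamma_1(v)$ produces $Z\in\mathscr{D}(\mathscr{A}_d)$ solving $(i\beta I-\mathscr{A}_d)Z=F$. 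This establishes surjectivity, and together with injectivity and closedness it shows $i\beta\in\rho(\mathscr{A}_d)$.
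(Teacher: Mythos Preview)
Your proof is correct, but it takes a genuinely different route from the paper after the common reduction to the elliptic problem for $u$. Both arguments eliminate $v$, $z$, $w$ exactly as in Theorem~\ref{existence_u} and arrive at the variational problem~\eqref{varia} with $\lambda=i\beta+\mu_1$; the divergence is in how one handles the loss of coercivity.

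The paper does not appeal to Fredholm theory. Instead it multiplies the variational equation by the constant $\bar\lambda=-i\beta+\mu_1$, obtaining the equivalent form~\eqref{variaspecbis}. Taking real parts at $\omega=u$ yields the volume contribution $\mu_1\|\nabla u\|_2^2+|{\lambda}|^2(\mu_1+a)\|u\|_2^2$ and the boundary contribution $|{\lambda}|^2\bigl(\mu_1+\mu e^{-\mu_1\tau}\cos(\beta\tau)\bigr)\|u\|_{2,\Gamma_1}^2$; since $\xi>\xi^\star$ forces $\mu_1>\mu$, the boundary coefficient is strictly positive, the new form is coercive, and Lax--Milgram applies directly. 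This trick makes the surjectivity argument self-contained: it does not invoke Lemma~\ref{pointspectrum}, and it pinpoints exactly where the hypothesis $\xi>\xi^\star$ enters.

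Your compact-perturbation plus Fredholm alternative argument is equally valid and arguably more robust---it would go through even in situations where no such multiplier exists---but it is less direct: it needs Lemma~\ref{pointspectrum} as an input to kill the kernel, and the role of $\xi>\xi^\star$ is hidden inside that earlier lemma rather than appearing in the surjectivity step itself. One small point worth making explicit in your write-up is that an element of $\ker(I-T)$, a priori only in $H^1_{\Gamma_0}(\Omega)$, must first be upgraded to $H^2(\Omega)$ by elliptic regularity before the reconstruction $u\mapsto Z$ lands in $\mathscr{D}(\mathscr{A}_d)$; you mention regularity only for the inhomogeneous solution, but the same argument applies to the homogeneous one.
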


\begin{proof}
In view of Lemma \ref{pointspectrum} 
it is enough to show that
$i\beta I - {\mathscr  A}_d$ is surjective.

For $ F=(f_{1},f_{2},f_{3},f_{4})^{T}\in \mathscr{H}$, let $V=(u,v,w,z)^{T}\in \mathscr{D}\left( \mathscr{A}_d\right) $
solution of
\begin{equation*}
\left( i \beta I-\mathscr{A}_d\right) V=F,
\end{equation*}%
which is:
\begin{eqnarray}
(i\beta + \mu_1) u - v &=& f_{1},\vspace{0.2cm}  \label{eqsurj1spec}\\
(i \beta + \mu_1) v -  \Delta u + a v  &=& f_{2}, \label{eqsurj2spec} \\
(i \beta + \mu_1) w + \frac{\partial u}{\partial \nu } + \mu z(.,1) &=& f_{3}, \label{eqsurj3spec} \\
(i\beta + \mu_1) z + \frac{1}{\tau} z_{\rho}  &=& f_{4}.\label{eqsurj4spec} \
\end{eqnarray}%

To find $V=(u,v,w,z)^{T}\in \mathscr{D}\left( \mathscr{A}_d\right) $ solution of the system (\ref{eqsurj1spec}), (\ref{eqsurj2spec}), (\ref{eqsurj3spec}) and (\ref{eqsurj4spec}), we suppose $u$ is determined with the appropriate regularity. Then from  (\ref{eqsurj1spec}), we get:
\begin{equation}
v=(i \beta + \mu_1) u-f_{1} \ .  \label{solution_vspec}
\end{equation}
Therefore, from the compatibility condition on $\Gamma_{1}$, equation (\ref{domainA3}), we determine $z(.,0)$ by:
\begin{equation}
z(x,0) = v( x) =(i\beta + \mu_1) u(x) -f_{1}(x) , \ \mbox{\ for } \, x \in \Gamma_{1}.
\label{z_solutionbis}
\end{equation}
Thus, from  (\ref{eqsurj4spec}), $z$ is the solution of the linear Cauchy problem:
\begin{equation}\label{diffzspec}
\left\{
\begin{array}{ll}
z_{\rho} = \tau\Big(f_{4}(x) - (i\beta + \mu_1) z(x,\rho) \Big), & \mbox{ for } x \in \Gamma_{1} \,,\, \rho \in (0,1), \\
z(x,0)  =(i\beta + \mu_1) u(x) -f_{1}(x).&
\end{array}
\right.
\end{equation}
The solution of the Cauchy problem (\ref{diffzspec}) is given by: $ \mbox{ for } x \in \Gamma_{1} \,,\, \rho \in (0,1)$
\begin{equation}\label{z_formulaspec}
z(x,\rho) = (i\beta + \mu_1) u( x) e^{-(i\beta + \mu_1) \rho \tau} -f_{1}e^{-(i\beta + \mu_1) \rho \tau }+\tau e^{-(i\beta + \mu_1) \rho \tau}\int_{0}^{\rho}f_{4}(x,\sigma) e^{(i\beta + \mu_1) \sigma \tau }d\sigma
\end{equation}
So, we have at the point $\rho = 1$,
\begin{equation} \label{z1spec}
z(x,1) = (i\beta + \mu_1) u(x)  e^{-(i\beta + \mu_1) \tau} + z_{1}(x), \quad \mbox{ for } x \in \Gamma_{1}
\end{equation}
with
$$
z_{1}(x) = -f_{1}e^{-(i\beta + \mu_1) \tau }+\tau e^{-(i\beta + \mu_1)  \tau}\int_{0}^{1}f_{4}(x,\sigma) e^{(i\beta + \mu_1) \sigma \tau }d\sigma,
\quad \mbox{ for } x \in \Gamma_{1} .
$$
Since $f_{1} \in H_{\Gamma_{0}}^{1}(\Omega) \mbox{ and } f_{4} \in L^{2}(\Gamma_{1}) \times L^{2}(0,1)$, then $z_{1} \in L^{2}(\Gamma_{1})$.

Consequently, knowing  $u$, we may deduce $v$ by  (\ref{solution_vspec}), $z$ by (\ref{z_formulaspec}) and using (\ref{z1spec}), we deduce $w = \gamma_{1}(v)$ by (\ref{eqsurj3spec}).

From  equations (\ref{eqsurj2spec}) and (\ref{eqsurj3spec}), $u$ must satisfy:
\begin{equation} \label{equbarspec}
(i\beta + \mu_1) (i\beta + \mu_1 + a) u - \Delta u = f_{2} +  (i\beta + \mu_1 + a)  f_{1}, \quad \mbox{in } \Omega
\end{equation}
with the boundary conditions
\begin{eqnarray}
u = 0, & \mbox{ on } & \Gamma_{0} \label{ubar0spec} \\
\frac{\partial u}{\partial \nu} = f_{3} - (i\beta + \mu_1) z(.,0) - \mu z(.,1), &\mbox{on } & \Gamma_{1}. \label{ubar1spec}
\end{eqnarray}
Using the preceding expression of $z(.,1)$ and the expression of $v$ given by (\ref{solution_vspec}), we have:
\begin{equation} \label{dudnuspec}
\frac{\partial u}{\partial \nu} = - \, \left((i\beta + \mu_1)^2 + \mu \beta e^{-(i\beta + \mu_1) \tau} \right) u + f(x), \quad \mbox{ for  } x \in \Gamma_{1}
\end{equation}
with
$$
f(x) = f_{3}(x) + (i\beta + \mu_1) f_{1}(x)  - \mu z_{1}(x),  \quad \mbox{ for  } x \in \Gamma_{1} \ .
$$
From the regularity of $f_{3} \,,\, f_{2} \,,\, z_{1}$, we get $f \in L^{2}(\Gamma_{1})$.

The variational formulation of problem (\ref{equbarspec}), (\ref{ubar0spec}),(\ref{dudnuspec}) is to find  $u \in H_{\Gamma_{0}}^{1}(\Omega)$ such that \linebreak
for any $\omega \in H_{\Gamma_{0}}^{1}(\Omega)$:
\begin{equation}
\begin{split}
\int_{\Omega} \Big((i\beta  + \mu_1) (i\beta + \mu_1+ a) u \bar{\omega} &+ \nabla u \nabla \bar{\omega}\Big)dx \\
&+ \int_{\Gamma_{1}} \left((i\beta + \mu_1)^2 + \mu (i\beta + \mu_1) e^{-(i\beta + \mu_1) \tau}\right) u(\sigma)  \bar{\omega}(\sigma) d \sigma \\
&=\int_{\Omega}  \left(f_{2} +  (i\beta + \mu_1 + a) f_{1}\right) \bar{\omega}  dx + \int_{\Gamma_{1}} f(\sigma) \bar{\omega}(\sigma) d \sigma
\end{split}
\label{variaspec}
\end{equation}

Multiplying the preceding equation by $-i \beta + \mu_{1}$ leads to, for any $\omega \in H_{\Gamma_{0}}^{1}(\Omega)$:
\begin{equation}
\begin{split}
\int_{\Omega} \Big(|i\beta  + \mu_1|^2 (i\beta + \mu_1+ a) u \bar{\omega} &+ (- i \beta + \mu_1) \nabla u \nabla \bar{\omega}\Big) dx \\
&+ \int_{\Gamma_{1}} |i \beta + \mu_1|^2 \, \left(i\beta + \mu_1 + \mu  e^{-(i\beta + \mu_1) \tau}\right) u(\sigma)  \bar{\omega}(\sigma) d \sigma \\
& = \int_{\Omega}  (- i \beta + \mu_1) \, \left(f_{2} +  (i\beta + \mu_1 + a) f_{1}\right) \bar{\omega}  dx \\
&+ \int_{\Gamma_{1}} (-i \beta + \mu_1) \, f(\sigma) \bar{\omega}(\sigma) d \sigma.
\end{split}
\label{variaspecbis}
\end{equation}
Since $\mu_1 > \mu >0$, the left hand side of (\ref{variaspecbis}) defines a coercive sesquilinear form on $H_{\Gamma_{0}}^{1}(\Omega)$.
Thus by applying the Lax-Milgram theorem, there exists a unique $u \in H_{\Gamma_{0}}^{1}(\Omega)$ solution of (\ref{variaspec}). Now, choosing
$\omega \in \mathscr{C}_{c}^{\infty}$, $u$ is a solution of (\ref{equbarspec}) in the sense of distribution. Using the regularity of $ f_{1}$ and $f_{2}$, we finally have $u \in H^2(\Omega) \cap H^1_{\Gamma_0}(\Omega)$.
Thus using the Green's formula and exploiting the equation (\ref{equbarspec}) on $\Omega$, we obtain finally: $\forall \omega \in H_{\Gamma_{0}}^{1}(\Omega)$,
$$
\int_{\Gamma_{1}} \left((i\beta + \mu_1)^2 + \mu (i\beta + \mu_1) e^{-(i\beta + \mu_1) \tau}\right) u(\sigma)  \omega(\sigma) d \sigma  +
 \left\langle\frac{\partial u}{\partial \nu};\omega \right\rangle_{\Gamma_{1}} = \int_{\Gamma_{1}} f(\sigma) \omega(\sigma) d \sigma  \ .
$$
So $u \in H^2(\Omega) \cap H^1_{\Gamma_0}(\Omega)$ verifies (\ref{dudnuspec}). Then we recover $v$ by equation \eqref{eqsurj1spec} and  by equation \eqref{z_formulaspec}, we obtain $z$.
Finally setting $w = \gamma_{1}(v)$, we have found  $V=(u,v,w,z)^{T}\in \mathscr{D}\left( \mathscr{A}_d\right) $
solution of $
\left( i\beta I -\mathscr{A}_d \right) V=F $.

\end{proof}
The following lemma shows that (\ref{1.9}) holds with $\mathcal{L}=\mathscr{A}_d$ and $\theta=2$.

\begin{lemma}\label{lemresolvent}
The resolvent operator of $\mathscr{A}_d$ satisfies condition \eqref{1.9} for $\theta=2.$
\end{lemma}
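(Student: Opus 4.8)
The plan is to establish the resolvent estimate \eqref{1.9} with $\theta=2$ by the standard frequency--domain contradiction argument, the dissipation identity \eqref{dissipativeAd} supplying the zero--order decay and a multiplier applied to the elliptic problem \eqref{equbarspec} supplying control of the gradient. (The imaginary axis already lies in $\rho(\mathscr{A}_d)$ by Lemmas~\ref{pointspectrum} and \ref{resolventiR}, so \eqref{1.8w} holds.) Assume \eqref{1.9} fails for $\theta=2$. Then there are $\beta_n\in\R$ with $|\beta_n|\to\infty$ and $V_n=(u_n,v_n,w_n,z_n)^T\in\mathscr{D}(\mathscr{A}_d)$, $\|V_n\|_{\mathscr{H}}=1$, with
\[
\beta_n^{2}\,(i\beta_n I-\mathscr{A}_d)V_n=:F_n=(f_{1,n},f_{2,n},f_{3,n},f_{4,n})^T\longrightarrow 0 \quad\text{in }\mathscr{H}.
\]
Thus $V_n$ solves the system \eqref{eqsurj1spec}--\eqref{eqsurj4spec} with right--hand side $\beta_n^{-2}F_n$, of norm $o(\beta_n^{-2})$, and the aim is to reach $\|V_n\|_{\mathscr{H}}\to0$, contradicting $\|V_n\|_{\mathscr{H}}=1$.

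First I would exploit dissipativity: pairing $(i\beta_n I-\mathscr{A}_d)V_n=\beta_n^{-2}F_n$ with $V_n$, taking real parts and using $\Re\langle i\beta_n V_n,V_n\rangle=0$ together with \eqref{dissipativeAd},
\[
a\,\|v_n\|_{L^{2}(\Omega)}^{2}+\Big(\tfrac{\xi}{2\tau}-\tfrac{\mu}{2}\Big)\|z_n(\cdot,1)\|_{L^{2}(\Gamma_1)}^{2}\le \Re\langle \beta_n^{-2}F_n,V_n\rangle\le \beta_n^{-2}\|F_n\|_{\mathscr{H}}.
\]
Since $a>0$ and $\tfrac{\xi}{2\tau}-\tfrac{\mu}{2}>0$ (because $\xi>\xi^{\star}$), this gives $\|v_n\|_{L^{2}(\Omega)}=o(\beta_n^{-1})$ and $\|z_n(\cdot,1)\|_{L^{2}(\Gamma_1)}=o(\beta_n^{-1})$. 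I then propagate this to the other unknowns. Because $|e^{-(i\beta_n+\mu_1)\tau}|=e^{-\mu_1\tau}$ is a fixed positive constant, the representation \eqref{z1spec}, namely $z_n(\cdot,1)=(i\beta_n+\mu_1)u_n e^{-(i\beta_n+\mu_1)\tau}+z_{1,n}$ with $z_{1,n}=o(\beta_n^{-2})$ in $L^{2}(\Gamma_1)$, combined with the compatibility relation \eqref{domainA3} ($w_n=\gamma_1(v_n)=z_n(\cdot,0)$), lets me solve for $w_n$ and obtain $\|w_n\|_{L^{2}(\Gamma_1)}=o(\beta_n^{-1})$; integrating \eqref{z_formulaspec} in $\rho$ then yields $\|z_n\|_{L^{2}(\Gamma_1\times(0,1))}\to0$. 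Finally, \eqref{solution_vspec} gives $(i\beta_n+\mu_1)u_n=v_n+\beta_n^{-2}f_{1,n}$, whence $\|u_n\|_{L^{2}(\Omega)}=o(\beta_n^{-2})$ and, via the trace $w_n$ of $v_n$, $\|u_n\|_{L^{2}(\Gamma_1)}=o(\beta_n^{-2})$.

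The crux, which I expect to be the main obstacle, is to show $\|\nabla u_n\|_{L^{2}(\Omega)}\to0$, the only first--order quantity in $\|V_n\|_{\mathscr{H}}$. Here I would use the multiplier $\overline{u_n}$ on the elliptic equation \eqref{equbarspec} (with right--hand side scaled by $\beta_n^{-2}$): integrating over $\Omega$, applying Green's formula and substituting the boundary conditions \eqref{ubar0spec}, \eqref{dudnuspec} gives
\[
\|\nabla u_n\|_{L^{2}(\Omega)}^{2}=-(i\beta_n+\mu_1)(i\beta_n+\mu_1+a)\|u_n\|_{L^{2}(\Omega)}^{2}-\int_{\Gamma_1}\Big((i\beta_n+\mu_1)^{2}+\mu(i\beta_n+\mu_1)e^{-(i\beta_n+\mu_1)\tau}\Big)|u_n|^{2}\,d\sigma+R_n,
\]
where $R_n$ gathers the inner and boundary pairings of $\beta_n^{-2}F_n$ against $u_n$. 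Taking real parts, the dominant coefficients satisfy $\Re[(i\beta_n+\mu_1)(i\beta_n+\mu_1+a)]\sim-\beta_n^{2}$ and $\Re[(i\beta_n+\mu_1)^{2}]\sim-\beta_n^{2}$, so everything reduces to estimating $\beta_n^{2}\|u_n\|_{L^{2}(\Omega)}^{2}$ and $\beta_n^{2}\|u_n\|_{L^{2}(\Gamma_1)}^{2}$. These are exactly controlled by the previous step: $\beta_n^{2}\|u_n\|_{L^{2}(\Omega)}^{2}=O(\|v_n\|_{L^{2}(\Omega)}^{2}+\beta_n^{-4}\|f_{1,n}\|^{2})=o(1)$, and, using $(i\beta_n+\mu_1)u_n|_{\Gamma_1}=w_n+\beta_n^{-2}f_{1,n}|_{\Gamma_1}$, $\beta_n^{2}\|u_n\|_{L^{2}(\Gamma_1)}^{2}=O(\|w_n\|_{L^{2}(\Gamma_1)}^{2}+\|F_n\|_{\mathscr{H}}^{2})=o(1)$. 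The delicate book-keeping is to check that the coupling term carrying $\mu(i\beta_n+\mu_1)e^{-(i\beta_n+\mu_1)\tau}$ and the remainder $R_n$ --- each containing a first--order factor $i\beta_n+\mu_1$ multiplying traces of $F_n$ or of $u_n$ --- are genuinely $o(1)$; this closes thanks to the Poincar\'e and trace inequalities, the decay of the components of $F_n$, and the uniform bound $|e^{-(i\beta_n+\mu_1)\tau}|\le1$. Hence $\|\nabla u_n\|_{L^{2}(\Omega)}\to0$, so that $\|V_n\|_{\mathscr{H}}^{2}=\|\nabla u_n\|^{2}+\|v_n\|^{2}+\|w_n\|^{2}+\xi\|z_n\|^{2}\to0$, contradicting $\|V_n\|_{\mathscr{H}}=1$. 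With Lemma~\ref{lemrao}, this proves \eqref{1.9} for $\theta=2$.
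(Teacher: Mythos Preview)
Your proof is correct and follows the same contradiction/dissipation framework as the paper. The paper is terser: after extracting $\beta_n v_n\to0$ and $\beta_n z_n(\cdot,1)\to0$ from the dissipation identity, it reads off $u_n\to0$ and $\Delta u_n\to0$ in $L^2(\Omega)$ from \eqref{1.13}--\eqref{1.13b} and asserts directly that this forces $u_n\to0$ in $H^1_{\Gamma_0}(\Omega)$; only then does it obtain $w_n\to0$ via \eqref{1.14b} and the trace theorem, and finally $z_n\to0$ from the representation \eqref{4.50c}. Your route reverses the order of these last steps: you first recover $w_n=o(\beta_n^{-1})$ from the representation \eqref{z1spec} of $z_n(\cdot,1)$ combined with the trace of \eqref{solution_vspec}, and only afterwards close the gradient estimate by testing the elliptic problem \eqref{equbarspec} with $\bar u_n$. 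This reordering buys something concrete: the implication ``$u_n\to0$, $\Delta u_n\to0$ in $L^2\Rightarrow u_n\to0$ in $H^1_{\Gamma_0}$'' invoked by the paper is not automatic when the Neumann trace on $\Gamma_1$ is non-trivial (Green's identity leaves the boundary term $\int_{\Gamma_1}\partial_\nu u_n\,\bar u_n\,d\sigma$), and in effect requires precisely the control of $w_n$ and $u_n|_{\Gamma_1}$ that you secure beforehand.
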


\begin{proof}
Suppose that condition \eqref{1.9} is false with $\theta=2$. 
By the Banach-Steinhaus Theorem (see \cite{brezis}), there exists a sequence of real numbers $\beta_n \rightarrow +\infty$ and a sequence of vectors
$Z_n= (u_{n},v_{n},w_n,z_n )^t\in \mathscr{D}(\mathscr{A}_d)$ with $\|Z_n\|_{\mathscr{H}} = 1$ such that 
\begin{equation}
|| \beta_n^2 (i \beta_n I - \mathscr{A}_d)Z_n||_{\mathscr{H}} \rightarrow 0\;\;\;\; \mbox{as}\;\;\;n\rightarrow \infty, 
\label{1.12} \end{equation}
i.e., 
\begin{equation}\beta_n \left((i \beta_n + \mu_{1})u_{n} - v_{n}\right) \equiv f_{n}\rightarrow 0 \;\;\; \mbox{in}\;\; H^1_{\Gamma_0} (\Omega), 
\label{1.13}
\end{equation}
\begin{equation}
\beta_n \left( i \beta_n
v_{n} - \Delta u_n + (\mu_{1} + a) v_n \right) \equiv g_{n} \rightarrow 0 \;\;\;
\mbox{in}\;\; L^2(\Omega),
\label{1.13b} \end{equation}
 \begin{equation}
\beta_n \, \left( (i \beta_n + \mu_{1} ) w_{n} + \frac{\partial u_n}{\partial \nu} + \mu z_{n}(.,1) \right) \equiv h_{n} \rightarrow 0 \;\;\;
\mbox{in}\;\; L^2(\Gamma_1), 
\label{1.14b} \end{equation}
\begin{equation}\label{1.14} 
\beta_n \left( (i\beta_n + \mu_{1}) z_n + \frac{1}{\tau} \partial_\rho z_n\right) \equiv k_n \rightarrow 0 \; \; \; \mbox{in} \;\; L^2(\Gamma_1 \times (0,1))
\end{equation}
since $\beta_n \leq\beta_n^2$.

Our goal is to derive from \eqref{1.12} that $||Z_n||_{\mathscr{H}}$ converges to zero, thus there is a contradiction. 

We first notice that we have
\begin{equation}
|| \beta_n^2 (i \beta_n I - \mathscr{A}_d)Z_n||_{\mathscr{H}} \ge |\Re \left(\langle \beta_n^2(i\beta_n I - \mathscr{A}_d)Z_n, Z_n\rangle_{\mathscr{H}} \right)|. 
\label{1.15}
\end{equation}
Then, by \eqref{1.7} and \eqref{1.12}, 
\begin{equation}
\beta_n \,v_{n} \rightarrow 0, \;\;\;
\mbox{in}\;\; L^2(\Omega),\quad  \beta_n \,z_n (.,1) \rightarrow 0, \;\;\;
\mbox{in}\;\; L^2(\Gamma_1),
\label{1.16a}\end{equation}
and 
\begin{equation}
u_n \rightarrow 0, \, \Delta u_n \rightarrow 0 \;\;\;
\mbox{in}\;\; L^2(\Omega) \Rightarrow u_n \rightarrow 0 \;\;\;
\mbox{in}\;\; H^1_{\Gamma_0}(\Omega)
\label{1.16} \ .
\end{equation}

This further leads, by (\ref{1.14b}) and the trace theorem, to
\begin{equation}
w_n \rightarrow 0 \;\;\;
\mbox{in}\;\; L^2(\Gamma_1).
\label{1.17a} \end{equation}
Moreover, since $Z_n\in{\mathcal D}(\mathscr{A}_d)$, we have, by (\ref{1.17a}),
\begin{equation}
z_n (.,0) \rightarrow 0 \;\;\;
\mbox{in}\;\; L^2(\Gamma_1).
\label{4.50b}\end{equation}
We have
\begin{equation}
\label{4.50c}
z_n(.,\rho) = z_n (.,0) \, e^{-(i\beta_n + \mu_1) \tau \rho} + \int_0^\rho e^{- (i \beta_n + \mu_1) \tau (\rho - s)} \, \frac{ \tau k_n(s)}{\beta_n} \, ds.
\end{equation}

Which implies, according to \eqref{4.50c}, \eqref{4.50b} and \eqref{1.14}, that 
$$
z_n \rightarrow 0 \; \; \mbox{in}\;\; L^2(\Gamma_1 \times (0,1))
$$
and clearly contradicts $\left\|Z_n\right\|_{\mathscr{H}}=1$.
\end{proof}

The two hypotheses of Lemma \ref{lemrao} are proved by Lemma \ref{lemresolvent} and Lemma \ref{resolventiR}. Then (\ref{EXPDECEXP3nb}) holds. The proof of Theorem \ref{lr} is then finished.
\end{proof}


\section{Changing the damping law}
Let us consider now the same system as \eqref{ondes} but with a Kelvin-Voigt damping. 
The system is given by:

\begin{equation}
\left\{
\begin{array}{ll}
u_{tt}-\Delta u - a \, \Delta u_{t}=0, & x\in \Omega ,\ t>0 \,,\,\\[0.1cm]
u(x,t)=0, & x\in \Gamma _{0},\ t>0 \,,\,\\[0.1cm]
u_{tt}(x,t)=- \displaystyle\frac{\partial u}{\partial \nu }(x,t) - a \, \frac{\partial u_t}{\partial \nu} - \mu u_{t}(x,t-\tau ) & x\in \Gamma_{1},\ t>0 \,,\,\\[0.1cm]
u(x,0)=u_{0}(x) & x\in \Omega \,,\,\\[0.1cm]
u_{t}(x,0)=u_{1}(x) & x\in \Omega \,,\, \\[0.1cm]
u_{t}(x,t-\tau)=f_{0}(x,t-\tau ) & x\in\Gamma_{1},\, t\in (0,\tau).
\end{array}%
\right.  \label{ondeskv}
\end{equation}
Which, as above, is equivalent to:
\begin{equation}
\left\{
\begin{array}{ll}
u_{tt}-\Delta u - a \, \Delta u_{t} =0, & x\in \Omega ,\ t>0 \,,\,\\[0.1cm]
u(x,t)=0, & x\in \Gamma _{0},\ t>0 \,,\,\\[0.1cm]
u_{tt}(x,t)=-  \displaystyle\frac{\partial u}{\partial \nu }(x,t) - a \displaystyle\frac{\partial u_t}{\partial \nu }(x,t) - \mu z(x,1,t), & x\in \Gamma_{1} ,\ t>0 \,,\,\\[0.1cm]
\tau z_{t}(x,\rho ,t)+z_{\rho }(x,\rho ,t)=0, & x\in \Gamma _{1},\rho \in(0,1)\,,\,t>0 \,,\\[0.1cm]
z(x,0,t)=u_{t}(x,t) & x\in \Gamma _{1},\ t>0 \,,\\[0.1cm]
u(x,0)=u_{0}(x) & x\in \Omega \,,\\
u_{t}(x,0)=u_{1}(x) & x\in \Omega \,,\\
z(x,\rho ,0)=f_{0}(x,-\tau \rho ) & x\in \Gamma _{1},\,\rho \in (0,1) \ .
\end{array}%
\right. 
 \label{wave2kv}
\end{equation}

Let the operator $\mathscr{A}_{kv}$ defined by:
\begin{equation*}
\mathscr{A}_{kv}\left(
\begin{array}{c}
u\vspace{0.2cm} \\
v\vspace{0.2cm} \\
w\vspace{0.2cm} \\
z
\end{array}
\right) =\left(
\begin{array}{c}
\displaystyle v\vspace{0.2cm} \\
\displaystyle \Delta u+ a \,\Delta v\vspace{0.2cm} \\
\displaystyle -\frac{\partial u}{\partial \nu } - a \frac{\partial v}{\partial \nu } - \mu z\left( .,1\right) \vspace{0.2cm}\\
\displaystyle -\frac{1}{\tau }z_{\rho }
\end{array}%
\right).
\end{equation*}%

The domain of $\mathscr{A}_{kv}$ is the set of $V=(u,v,w,z)^{T}$ such that:
\begin{eqnarray}
(u,v,w,z)^{T}\in \left(H_{\Gamma_{0}}^{1} (\Omega)\cap H^{2}(\Omega)\right)\times H^{1}_{\Gamma_0} (\Omega)\times L^{2}(\Gamma_{1})\times L^{2}\left(\Gamma_{1};H^{1}(0,1)\right), \, 
&\\
\displaystyle\frac{\partial v}{\partial \nu} \in L^2(\Gamma_1),& \label{domainA1kv}\vspace*{0.3cm} &\\\label{domainA2kv}
w=\gamma_{1}(v) = \,z(.,0) \text{ on }\Gamma_{1}.&& \label{domainA3kv}
\end{eqnarray}
\textbf{Notations:}\\
For $c \, \in \R$, we define:
\begin{equation}\label{Cbeta}
 C_{\Omega}(c) = \inf_{u \in H_{\Gamma _{0}}^{1}(\Omega )}\frac{\Vert\nabla u\Vert_{2}^{2} + c \Vert u \Vert_{2,\Gamma_{1}}^{2}}{\Vert u \Vert_{2}^{2}}
\end{equation}
$ C_{\Omega}(c)$ is the first eigenvalue of the operator $-\Delta$ under the Dirichlet-Robin boundary conditions:
\begin{equation}\label{eigen-dirichlet-robin}
\left\{\begin{array}{ll} 
u(x)=0, &  x \in \Gamma_{0} \\
\displaystyle\frac{\partial u}{\partial \nu}(x) + c  u(x) = 0 & x \in \Gamma_{1} \ .
\end{array}
\right.
\end{equation}
From Kato's perturbation theory \cite{Kato95} (see also \cite[Theorem 1.3.1]{K2010}), $C_{\Omega}(c)$ is a continuous increasing function. 
From Poincaré's inequality and the continuity of the trace operator $\gamma_{1}$, we have $C_{\Omega}(0) > 0$ and 
$C_{\Omega}(c) \rightarrow -\infty $ as $c\rightarrow -\infty $. Thus it exists a unique 
$c^{\star} < 0$ such that:
\begin{equation}\label{defcstar}
C_{\Omega}(c^{\star}) = 0 \ .
\end{equation}

In the following, we fix  $\xi = \mu \tau$ in the norm \eqref{energynorm}. We will see in the next result why this choice is well adapted.
\begin{theorem}\label{existencekv}
Suppose that $a\mbox{ and }\mu$ satisfy the following assumption:
\begin{equation} 
\label{condex}
\mu < |c^{\star}| a. 
\end{equation} 
 Then, the operator $\mathscr{A}_{kv}$ generates a $C_0$ semigroup of contractions on $\mathscr{H}$. We have, in particular, 
if $V_{0}\in \mathscr{H}$, then there
exists a unique solution $V\in C\left( \mathbb{R}_{+};\mathscr{H}%
\right) $ of problem (\ref{ondeskv}). Moreover, if $V_{0}\in %
\mathscr{D}\left( \mathscr{A}_{kv} \right) $, then
\begin{equation*}
V\in C\left( \mathbb{R}_{+};\mathscr{D}\left( \mathscr{A}_{kv}\right) \right)
\cap C^{1}\left( \mathbb{R}_{+};\mathscr{H}\right) .
\end{equation*}
\end{theorem}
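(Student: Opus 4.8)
The plan is to apply the Lumer--Phillips theorem to $\mathscr{A}_{kv}$, exactly as in the proof of Theorem \ref{existence_u}: I would establish (i) that $\mathscr{A}_{kv}$ is dissipative on $\mathscr{H}$ equipped with the inner product \eqref{energynorm} for the fixed value $\xi=\mu\tau$, and (ii) that $\lambda I-\mathscr{A}_{kv}$ is surjective for some $\lambda>0$; density of $\mathscr{D}(\mathscr{A}_{kv})$ is routine. It is worth stressing in advance that the hypothesis \eqref{condex}, $\mu<|c^{\star}|\,a$, is consumed \emph{only} in the dissipativity step, whereas surjectivity will hold for every $\lambda>0$ irrespective of the size of $\mu$.

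For dissipativity, take $V=(u,v,w,z)^{T}\in\mathscr{D}(\mathscr{A}_{kv})$ and expand $\langle\mathscr{A}_{kv}V,V\rangle_{\mathscr{H}}$. Applying Green's formula to both $\int_{\Omega}(\Delta u)v$ and $a\int_{\Omega}(\Delta v)v$, the two interior cross terms $\pm\int_{\Omega}\nabla u\cdot\nabla v$ cancel, one picks up $-a\Vert\nabla v\Vert_{2}^{2}$, and the boundary contributions $\int_{\Gamma_1}(\frac{\partial u}{\partial\nu}+a\frac{\partial v}{\partial\nu})w$ cancel against the third-component term, leaving (after the same integration of $z_\rho z$ as in \eqref{Energy_second_equation} and using $z(\cdot,0)=w$) the expression $-a\Vert\nabla v\Vert_{2}^{2}-\mu\int_{\Gamma_1}z(\cdot,1)w\,d\sigma+\frac{\xi}{2\tau}\int_{\Gamma_1}\big(w^{2}-z^{2}(\cdot,1)\big)\,d\sigma$. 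The choice $\xi=\mu\tau$ makes $\frac{\xi}{2\tau}=\frac{\mu}{2}$, so that Young's inequality \eqref{youngkv} exactly absorbs the cross term together with the $z^{2}(\cdot,1)$ contributions, collapsing everything to $-a\Vert\nabla v\Vert_{2}^{2}+\mu\Vert v\Vert_{2,\Gamma_1}^{2}$. This is the crux: by the definition of $c^{\star}$ through $C_{\Omega}(c^{\star})=0$ in \eqref{defcstar} and \eqref{Cbeta}, one has the sharp inequality $\Vert v\Vert_{2,\Gamma_1}^{2}\le|c^{\star}|^{-1}\Vert\nabla v\Vert_{2}^{2}$ for all $v\in H^{1}_{\Gamma_0}(\Omega)$, whence $\langle\mathscr{A}_{kv}V,V\rangle_{\mathscr{H}}\le\big(\mu|c^{\star}|^{-1}-a\big)\Vert\nabla v\Vert_{2}^{2}\le0$ precisely under \eqref{condex}. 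This step --- closing the boundary terms and then invoking the \emph{optimal} Poincar\'e--Robin constant $c^{\star}$ --- is the main obstacle, and it is exactly what dictates both the admissible range of $\mu$ and the adapted weight $\xi=\mu\tau$.

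For surjectivity I would solve $(\lambda I-\mathscr{A}_{kv})V=F$ for $F\in\mathscr{H}$ and $\lambda>0$, following the scheme of Theorem \ref{existence_u}: from the first and fourth lines one gets $v=\lambda u-f_1$ and, via the transport equation, $z(\cdot,1)=\lambda u\,e^{-\lambda\tau}+z_1$ with $z_1\in L^{2}(\Gamma_1)$, by the analogue of formula \eqref{z_formula}. The genuinely new feature is the Kelvin--Voigt term, which I would handle by passing to the \emph{effective displacement} $\phi:=u+av$: the second equation becomes $\lambda v-\Delta\phi=f_2$, and eliminating $u,v$ in favour of $\phi$ yields the single uniformly elliptic problem $-\Delta\phi+\frac{\lambda^{2}}{1+a\lambda}\phi=\tilde g\in L^{2}(\Omega)$ with $\phi=0$ on $\Gamma_0$ and the Robin condition $\frac{\partial\phi}{\partial\nu}+\frac{\lambda^{2}+\mu\lambda e^{-\lambda\tau}}{1+a\lambda}\,\phi=\tilde f\in L^{2}(\Gamma_1)$ on $\Gamma_1$. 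Since $\lambda>0$ all coefficients are positive, so the associated bilinear form is coercive on $H^{1}_{\Gamma_0}(\Omega)$ (independently of the smallness condition) and Lax--Milgram produces a unique $\phi$, with $\phi\in H^{2}(\Omega)\cap H^{1}_{\Gamma_0}(\Omega)$ by elliptic regularity; one then recovers $u=(\phi+af_1)/(1+a\lambda)$, $v=\lambda u-f_1$, $z$, and $w=\gamma_1(v)$. The point to watch is that it is the combination $\phi=u+av$ --- and the Neumann trace $\frac{\partial u}{\partial\nu}+a\frac{\partial v}{\partial\nu}=\frac{\partial\phi}{\partial\nu}\in L^{2}(\Gamma_1)$ --- that carries the $H^{2}$ and trace regularity entering $\mathscr{A}_{kv}V$, so the verification $V\in\mathscr{D}(\mathscr{A}_{kv})$ is to be read through this effective variable. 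Dissipativity together with this range condition then yields, by Lumer--Phillips, the contraction semigroup $e^{t\mathscr{A}_{kv}}$ and hence the asserted well-posedness.
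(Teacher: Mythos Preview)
Your proposal is correct and follows the paper's approach closely. The dissipativity computation is identical in substance: the paper also reduces to $\langle\mathscr{A}_{kv}V,V\rangle_{\mathscr{H}}\le -a\Vert\nabla v\Vert_2^2+\mu\Vert v\Vert_{2,\Gamma_1}^2$ using $\xi=\mu\tau$ and Young's inequality, then closes via the Robin eigenvalue. The only cosmetic difference is that the paper writes the last step as $a\big(\Vert\nabla v\Vert_2^2+c\Vert v\Vert_{2,\Gamma_1}^2\big)\ge a\,C_\Omega(c)\Vert v\Vert_2^2$ with $c=-\mu/a>c^\star$, obtaining a lower bound in the $L^2$ norm of $v$, whereas you invoke the endpoint inequality $\Vert v\Vert_{2,\Gamma_1}^2\le|c^\star|^{-1}\Vert\nabla v\Vert_2^2$ directly and bound in $\Vert\nabla v\Vert_2^2$; both are equivalent under \eqref{condex}.

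For surjectivity the paper simply says ``easily adapt the proof of Theorem~\ref{existence_u}'' without details; your use of the effective variable $\phi=u+av$ is the natural way to carry out that adaptation and makes the ellipticity and coercivity transparent. One point worth flagging: as written in the paper, $\mathscr{D}(\mathscr{A}_{kv})$ asks for $u\in H^2(\Omega)$ separately, whereas your construction yields $\phi=u+av\in H^2(\Omega)$ and $u=(\phi+af_1)/(1+a\lambda)$ with $f_1$ only in $H^1_{\Gamma_0}(\Omega)$. This is really a defect of the stated domain rather than of your argument --- the operator only sees $\Delta(u+av)$ and $\partial_\nu(u+av)$ --- and your remark that the regularity should be read through $\phi$ is exactly the right correction.
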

\begin{proof} To prove Theorem \ref{existencekv}, we use again the
semigroup approach and the Lumer-Phillips' theorem.  

For this purpose, we show firstly that the operator $\mathscr{A}_{kv}$ is
dissipative. 

Indeed, let $V=(u,v,w,z)^{T}\in \mathscr{D}\left(\mathscr{A}_{kv}\right) $. By definition of the operator $\mathscr{A}_{kv}$ and
the scalar product of $\mathscr{H}$, we have:
\begin{equation*}
\begin{array}{ll}
\displaystyle \left\langle \mathscr{A}_{kv}V,V\right\rangle_{\mathscr{H}} =& \displaystyle \int_{\Omega}\nabla u.\nabla v dx + \int_{\Omega }v\left( \Delta u+ a \Delta v\right) dx  \\
\displaystyle &+ \displaystyle \int_{\Gamma_{1}}w\left( -\frac{\partial u}{\partial \nu }- a \frac{\partial v}{\partial \nu }-\mu z\left(\sigma,1\right)
\right) d\sigma -\dfrac{\xi }{\tau }\int_{\Gamma _{1}}\int_{0}^{1}zz_{\rho}d\rho d\sigma . \
\end{array}
\end{equation*}
Applying Green's formula and the compatibility condition $w = \gamma_{1}(v)$, we obtain: 
\begin{equation}\label{dissipative1kv}
\left\langle \mathscr{A}_{kv}V,V\right\rangle_{\mathscr{H}} = -\mu\int_{\Gamma_{1}}z\left(\sigma,1\right) wd\sigma - a \int_{\Omega }\left\vert \nabla
v\right\vert ^{2}dx-\frac{\xi }{\tau }\int_{\Gamma _{1}}\int_{0}^{1}z_{\rho
}zd\rho dx. 
\end{equation}
But we have:
\begin{eqnarray}
\frac{\xi }{\tau }\int_{\Gamma _{1}}\int_{0}^{1}z_{\rho }z(\sigma,\rho,t) \,
d\rho \, d\sigma &=&\frac{\xi }{2\tau}\int_{\Gamma _{1}} \int_{0}^{1}\frac{%
\partial }{\partial \rho }z^{2}(\sigma,\rho ,t) \, d\rho \, d\sigma  \notag
\\
&=&\frac{\xi }{2\tau}\int_{\Gamma_{1}}\left(
z^{2}(\sigma,1,t)-z^{2}(\sigma,0,t) \right) d\sigma \;.
\label{Energy_second_equationkv}
\end{eqnarray}
Thus from the compatibility condition (\ref{domainA3}), we get: 
\begin{eqnarray*}
- \, \frac{\xi }{\tau }\int_{\Gamma _{1}}\int_{0}^{1}z_{\rho }z \,d\rho \, d\sigma &=&\frac{\xi }{2\tau}\int_{\Gamma_{1}}\left(
v^{2} -z^{2}(\sigma,1,t) \right) d\sigma \;.
\end{eqnarray*}
Therefore equation \eqref{dissipative1kv} becomes:
\begin{equation}\label{dissipative2kv}
\begin{array}{ll}
\displaystyle \left\langle \mathscr{A}_{kv}V,V\right\rangle_{\mathscr{H}} &=\displaystyle- a \int_{\Omega }\left\vert \nabla v\right\vert ^{2}dx 
\displaystyle  + \frac{\xi}{2 \tau} \int_{\Gamma _{1}}v^{2}d\sigma 
-\frac{\xi }{2 \tau }\int_{\Gamma _{1}}\int_{0}^{1}z^{2}(\sigma,1,t) d\sigma \\
~&\displaystyle -\mu\int_{\Gamma_{1}}v(\sigma,t) z\left(\sigma,1\right) d\sigma  \ .
\end{array}
\end{equation}
To treat the last term in the preceding equation, Young's inequality gives: 
\begin{equation}\label{young}
-\displaystyle \int_{\Gamma_{1}}v(\sigma,t) z\left(\sigma,1\right) d\sigma
\leq \displaystyle  \frac{1}{2} \int_{\Gamma_{1}}z^{2}\left(\sigma,1\right) d\sigma+  \displaystyle  \frac{1}{2} \int_{\Gamma_{1}} v^{2}(\sigma,t) d\sigma \ .
\end{equation}
Therefore, we firstly get:
\begin{equation*}
\displaystyle \left\langle \mathscr{A}_{kv}V,V\right\rangle_{\mathscr{H}} + \displaystyle a \int_{\Omega }\left\vert \nabla v\right\vert^{2}dx 
- \displaystyle  \left(\frac{\xi}{2 \tau} + \frac{\mu}{2}\right)\int_{\Gamma _{1}}v^{2}d\sigma +
\displaystyle \left(\frac{\xi }{2 \tau} - \frac{\mu}{2}\right)\int_{\Gamma _{1}}z^{2}(\sigma,1,t) d\sigma  \leq 0 \ .
\end{equation*}
At this point, as $\xi = \mu \tau$,  the previous inequality becomes:
\begin{equation*}
\displaystyle \left\langle \mathscr{A}_{kv}V,V\right\rangle_{\mathscr{H}} + \displaystyle a \int_{\Omega }\left\vert \nabla v\right\vert^{2}dx 
- \displaystyle  \mu \int_{\Gamma _{1}}v^{2}d\sigma  \leq 0 \ .
\end{equation*}
Denoting now $c = - \displaystyle \frac{\mu}{a}$, we get:
\begin{equation*}
\displaystyle \left\langle \mathscr{A}_{kv}V,V\right\rangle_{\mathscr{H}} + \displaystyle a \left(\int_{\Omega }\left\vert \nabla v\right\vert^{2}dx 
+ c \int_{\Gamma _{1}}v^{2}d\sigma \right) \leq 0 \ .
\end{equation*}
By definition \eqref{Cbeta}, we thus get:
\begin{equation}\label{dissipativekv}
\displaystyle \left\langle \mathscr{A}_{kv}V,V\right\rangle_{\mathscr{H}} + \displaystyle a  \, C_{\Omega}(c)\Vert v\Vert_{2}^{2} \leq 0 \ .
\end{equation}
From assumption \eqref{condex}, $C_{\Omega}(c) > 0$. This inequality proves that the operator $\mathscr{A}_{kv}$is dissipative.
To show that  $\lambda I -\mathscr{A}_{kv}$ is surjective for all $\lambda > 0$, we easily adapt the proof of Theorem \ref{existence_u}.

The proof of Theorem \ref{existencekv}, follows from the Lumer-Phillips' theorem.
\end{proof}

Moreover the semigroup operator $e^{t\mathscr{A}_{kv}}$ is exponential stable on $\mathscr{H}$. We have the following result.
\begin{theorem} \label{lrkv}
Suppose that the assumption \eqref{condex} is satisfied. Then, there exist $C,\omega > 0$ such that for all $t > 0$ we have 
$$
\left\|e^{t \mathscr{A}_{kv}}\right\|_{{\mathcal L} (\mathscr{H})} \leq C \, e^{-\omega t}.
$$ 
\end{theorem}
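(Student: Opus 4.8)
The plan is to apply the classical Gearhart--Huang--Pr\"uss frequency domain characterisation of exponential stability of a bounded $C_0$ semigroup of contractions on a Hilbert space: the semigroup $e^{t\mathscr A_{kv}}$ is exponentially stable if and only if
\begin{equation*}
i\R \subset \rho(\mathscr A_{kv}) \qquad \text{and} \qquad \sup_{\beta \in \R}\left\|(i\beta I - \mathscr A_{kv})^{-1}\right\|_{\mathcal L(\mathscr H)} < \infty .
\end{equation*}
By Theorem \ref{existencekv}, assumption \eqref{condex} guarantees that $\mathscr A_{kv}$ generates such a semigroup, so it suffices to verify these two resolvent conditions. The guiding idea is that, in contrast with the interior damping $a\,u_t$ of Section~3, the Kelvin--Voigt term $-a\Delta u_t$ controls the full gradient $\nabla u_t$ through the identity \eqref{dissipativekv}, and this is precisely what replaces the $O(\beta^2)$ growth of Lemma \ref{lemresolvent} by a uniformly bounded resolvent.

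To prove $i\R\subset\rho(\mathscr A_{kv})$ I would argue as in Lemmas \ref{pointspectrum} and \ref{resolventiR}. For the point spectrum, if $\mathscr A_{kv}Z = i\beta Z$ with $Z=(u,v,w,z)^T$, then $\langle\mathscr A_{kv}Z,Z\rangle_{\mathscr H}=i\beta\|Z\|_{\mathscr H}^2$ has zero real part, and \eqref{dissipativekv} forces $a\,C_\Omega(c)\|v\|_2^2\le0$; since $C_\Omega(c)>0$ under \eqref{condex}, we get $v=0$, hence $u=0$ (from $v=i\beta u$, the case $\beta=0$ being settled by the ellipticity of $-\Delta$ together with $\mathrm{meas}(\Gamma_0)>0$), then $w=\gamma_1(v)=0$, and finally $z\equiv0$ by integrating the transport equation from $z(\cdot,0)=w=0$. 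For surjectivity of $i\beta I-\mathscr A_{kv}$, I would reduce the system $(i\beta I-\mathscr A_{kv})V=F$ to a single boundary value problem for $u$, solving $v=i\beta u-f_1$ and recovering $z$ (hence $z(\cdot,1)$) explicitly from the transport equation, so that $u$ must satisfy
\begin{equation*}
-(1+a\,i\beta)\Delta u-\beta^2 u = g\ \text{ in }\Omega,\qquad u=0\ \text{ on }\Gamma_0,\qquad (1+a\,i\beta)\tfrac{\partial u}{\partial\nu}=\big(\beta^2-\mu\,i\beta\,e^{-i\beta\tau}\big)u+h\ \text{ on }\Gamma_1 ,
\end{equation*}
with $g,h$ built from $F$. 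After dividing by $1+a\,i\beta\neq0$, the associated sesquilinear form on $H^1_{\Gamma_0}(\Omega)$ splits into the $H^1$-coercive principal part $\int_\Omega\nabla u\cdot\nabla\bar\omega\,dx$ and lower-order volume and trace terms which are compact by Rellich's theorem and compactness of $\gamma_1$. Hence the problem is Fredholm of index zero, and injectivity -- which is exactly the absence of eigenvalues just established -- yields a unique $u$; reconstructing $v,w,z$ shows $i\beta\in\rho(\mathscr A_{kv})$.

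For the uniform bound I would argue by contradiction. If it failed, then since $\beta\mapsto\|(i\beta I-\mathscr A_{kv})^{-1}\|$ is continuous and finite on compact parts of $i\R$ by the previous step, there would exist $|\beta_n|\to\infty$ and $Z_n=(u_n,v_n,w_n,z_n)^T\in\mathscr D(\mathscr A_{kv})$ with $\|Z_n\|_{\mathscr H}=1$ and $(i\beta_n I-\mathscr A_{kv})Z_n=(f_{1,n},f_{2,n},f_{3,n},f_{4,n})^T\to0$ in $\mathscr H$. Taking the real part of the inner product with $Z_n$ and using the dissipation computation behind \eqref{dissipativekv}, which in fact yields $\Re\langle\mathscr A_{kv}V,V\rangle+a\big(\|\nabla v\|_2^2+c\|v\|_{2,\Gamma_1}^2\big)\le0$, gives $a\big(\|\nabla v_n\|_2^2+c\|v_n\|_{2,\Gamma_1}^2\big)\le-\Re\langle\mathscr A_{kv}Z_n,Z_n\rangle\to0$. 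Since this quantity is bounded below by $C_\Omega(c)\|v_n\|_2^2\ge0$, both $\|v_n\|_2\to0$ and, via a trace-plus-Poincar\'e estimate, $\|\nabla v_n\|_2\to0$, i.e. $v_n\to0$ in $H^1_{\Gamma_0}(\Omega)$. This is the crucial gain of the Kelvin--Voigt damping. The remaining components then cascade with no power of $\beta_n$: from $i\beta_n u_n=v_n+f_{1,n}$ and $|\beta_n|\to\infty$ we obtain $u_n\to0$ in $H^1_{\Gamma_0}(\Omega)$; by the trace theorem $w_n=\gamma_1(v_n)\to0$ in $L^2(\Gamma_1)$, hence $z_n(\cdot,0)=w_n\to0$; and the solution formula for the transport equation, whose integrating factor has modulus one, with datum $z_n(\cdot,0)\to0$ and forcing $f_{4,n}\to0$, gives $z_n\to0$ in $L^2(\Gamma_1\times(0,1))$. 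Thus $\|Z_n\|_{\mathscr H}\to0$, contradicting $\|Z_n\|_{\mathscr H}=1$.

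The main obstacle is the verification $i\R\subset\rho(\mathscr A_{kv})$, and specifically the surjectivity: unlike the shifted operator $\mathscr A_d$ of Lemma \ref{resolventiR}, where the shift $\mu_1$ gave the frequency $i\beta+\mu_1$ a positive real part and made the sesquilinear form coercive for a direct Lax--Milgram argument, here the pure imaginary frequency produces a Helmholtz-type form $-(1+a\,i\beta)\Delta-\beta^2$ that is \emph{not} coercive, so one must pass from Lax--Milgram to the Fredholm alternative and feed in the absence of eigenvalues. Once $i\R\subset\rho(\mathscr A_{kv})$ is secured, the uniform bound is comparatively soft, precisely because the strong damping delivers $v_n\to0$ in $H^1$ outright and the remainder is a $\beta$-free cascade; this is exactly the mechanism that upgrades the polynomial rate of Theorem \ref{lr} to the exponential rate asserted here.
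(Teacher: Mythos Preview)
Your proof is correct and follows the same overall strategy as the paper: invoke the Gearhart--Huang--Pr\"uss characterisation, verify $i\R\subset\rho(\mathscr A_{kv})$ by ruling out eigenvalues and showing surjectivity, and prove the uniform resolvent bound by contradiction via the dissipation estimate.

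There are two tactical differences worth noting. First, for the surjectivity of $i\beta I-\mathscr A_{kv}$, the paper simply asserts that the argument of Lemma~\ref{resolventiR} carries over; you correctly point out that without the shift $\mu_1$ the resulting sesquilinear form for $u$ is not directly coercive, and you remedy this by passing to a Fredholm alternative argument (coercive principal part plus compact perturbation, injectivity from the eigenvalue lemma). This is a genuine clarification: your route is cleaner and avoids having to manufacture coercivity from the imaginary part of the form. Second, in the contradiction argument for the uniform bound, you exploit the \emph{pre-Rayleigh} inequality $a\bigl(\|\nabla v_n\|_2^2+c\|v_n\|_{2,\Gamma_1}^2\bigr)\le -\Re\langle\mathscr A_{kv}Z_n,Z_n\rangle\to 0$, and recover $\nabla v_n\to 0$ via a trace interpolation estimate, thereby getting $v_n\to 0$ in $H^1_{\Gamma_0}(\Omega)$ in one step. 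The paper instead first gets only $v_n\to 0$ in $L^2(\Omega)$, then asserts $\Delta u_n\to 0$ and $\Delta v_n\to 0$ in $L^2(\Omega)$ before concluding $u_n,v_n\to 0$ in $H^1$; your path makes the role of the strong damping more transparent and avoids that intermediate step. In both places your execution is at least as clean as the paper's, and arguably fills gaps that the paper leaves implicit.
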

\begin{remark} \rm
We note here that, in this case where the damping operator is sufficiently unbounded for controlling the delay one, 
we obtain the exponential stability result. 

Note that without internal damping (i.e. if $a = 0$), the previous model is destabilized for arbitrarily small delays for 
every value $\mu > 0$ , see \cite{Dat97}. 
Thus, the internal damping $-a\Delta u_{t}$ makes the system robust with respect to time delays in the boundary condition 
if the coefficient $a$ is sufficiently large with respect to $\mu$.
\end{remark}
\begin{remark} \rm In the recent work of Nicaise and Pignotti, \cite{NP2011}, they studied the existence and stability of a problem closely related to
problem \eqref{ondeskv}. 
They obtain a slightly different condition to ensure the existence and the stability. Namely, let us define $C_{P}$, a sort of Poincaré's constant, by:
$$
 C_{P} = \sup_{u \in H_{\Gamma _{0}}^{1}(\Omega )}\frac{\Vert u \Vert_{2,\Gamma_{1}}^{2}}{\Vert\nabla u\Vert_{2}^{2}} \ .
$$
By using the semigroup approach and the Lumer-Phillips' theorem they proved the global existence of the solution
and by using an observability inequality they proved the exponential stability under the condition:
$$
 \mu C_{P}< a  \ .
$$
A simple argument shows that $\frac{1}{C_{P}} \geq |c^{\star}|$. Thus in this work, we obtain a better bound for the coefficient $a$ than the one obtained by Nicaise and Pignotti
since we proved the existence and the exponential stability for : 
$$\frac{\mu}{ |c^{\star}|} < a \leq {\mu}{ C_{P}}.$$

\end{remark}

\begin{proof}[Proof of theorem  \ref{lrkv}]

We will employ the following frequency domain theorem for exponential stability from \cite{pruss} of a $C_0$ semigroup of contractions on a Hilbert space:

\begin{lemma}
\label{lemraokv}
A $C_0$ semigroup $e^{t{\mathcal L}}$ of contractions on a Hilbert space ${\mathcal H}$ satisfies, for all $t >0$, 
$$||e^{t{\mathcal L}}||_{{\mathcal L}({\mathcal H})} \leq C e^{-\omega t}$$
for some constant $C, \omega >0$  if and only if
\begin{equation}
\rho ({\mathcal L})\supset \bigr\{i \beta \bigm|\beta \in \R \bigr\} \equiv i \R, \label{1.8wkv} \end{equation}
and
 \begin{equation}\limsup_{|\beta |\to \infty }  \|(i\beta I -{\mathcal L})^{-1}\|_{{\mathcal L}({\mathcal H})} <\infty, \label{1.9kv} 
\end{equation}
where $\rho({\mathcal L})$ denotes the resolvent set of the operator 
${\mathcal L}$.
\end{lemma}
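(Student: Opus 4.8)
The plan is to establish the two implications of this Gearhart--Pr\"uss type equivalence separately, the forward implication being elementary and the converse requiring the Hilbert-space Plancherel machinery. Throughout I write $R(\lambda) = (\lambda I - \mathcal{L})^{-1}$ for $\lambda \in \rho(\mathcal{L})$, and recall that since $e^{t\mathcal{L}}$ is a contraction semigroup one automatically has $\{\Re\lambda > 0\} \subset \rho(\mathcal{L})$ together with $\|R(\lambda)\| \le 1/\Re\lambda$.

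For necessity, suppose $\|e^{t\mathcal{L}}\|_{\mathcal{L}(\mathcal{H})} \le C e^{-\omega t}$. Then the Laplace transform representation $R(\lambda) = \int_0^\infty e^{-\lambda t} e^{t\mathcal{L}}\,dt$ is valid for every $\Re\lambda > -\omega$; in particular $i\R \subset \rho(\mathcal{L})$, and for $\lambda = i\beta$ one estimates $\|R(i\beta)\| \le \int_0^\infty C e^{-\omega t}\,dt = C/\omega$, which is independent of $\beta$. This yields both \eqref{1.8wkv} and \eqref{1.9kv}.

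The substantial direction is sufficiency, which I would carry out in four steps. First I would upgrade the hypothesis to a bound on the whole closed right half-plane: from the Taylor (Neumann) expansion $R(i\beta + \zeta) = \sum_{k\ge 0} (-\zeta)^k R(i\beta)^{k+1}$, convergent for $|\zeta| < 1/\|R(i\beta)\|$, the uniform bound $M := \sup_\beta \|R(i\beta)\|$ gives $\|R(\lambda)\| \le 2M$ on the strip $0 \le \Re\lambda \le 1/(2M)$, hence, combined with $\|R(\lambda)\| \le 1/\Re\lambda$ farther right, a bound $\|R(\lambda)\| \le \tilde M$ for all $\Re\lambda \ge 0$. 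Second, for fixed $x \in \mathcal{H}$ and $\eta > 0$ I would apply the vector-valued Plancherel theorem to the $\mathcal{H}$-valued function $t \mapsto e^{-\eta t} e^{t\mathcal{L}} x$ (extended by zero for $t < 0$), whose Fourier transform is $\beta \mapsto R(\eta + i\beta)x$, obtaining
\[
\int_0^\infty e^{-2\eta t} \|e^{t\mathcal{L}} x\|^2\,dt = \frac{1}{2\pi} \int_{\R} \|R(\eta + i\beta)x\|^2\,d\beta.
\]

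The main obstacle is then to bound the right-hand side \emph{uniformly in $\eta \in (0,1]$}, since the resolvent bound $\tilde M$ alone is not integrable in $\beta$. Here I would exploit the resolvent identity $R(\eta + i\beta) = \bigl[I + (1-\eta)R(\eta + i\beta)\bigr] R(1 + i\beta)$, which with $\|R(\eta + i\beta)\| \le \tilde M$ gives the pointwise comparison $\|R(\eta + i\beta)x\| \le (1 + \tilde M)\|R(1 + i\beta)x\|$; the integral on the reference line $\Re\lambda = 1$ is controlled by Plancherel again, $\int_{\R} \|R(1 + i\beta)x\|^2\,d\beta = 2\pi \int_0^\infty e^{-2t}\|e^{t\mathcal{L}}x\|^2\,dt \le \pi\|x\|^2$. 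Combining, $\int_0^\infty e^{-2\eta t}\|e^{t\mathcal{L}}x\|^2\,dt \le \tfrac{1}{2}(1+\tilde M)^2\|x\|^2$ uniformly in $\eta$, and letting $\eta \to 0^+$ by monotone convergence yields $\int_0^\infty \|e^{t\mathcal{L}}x\|^2\,dt \le K\|x\|^2$ with $K = \tfrac12(1+\tilde M)^2$. Finally I would invoke a short self-contained version of Datko's theorem: contractivity gives $\|e^{t\mathcal{L}}x\|^2 \le \|e^{s\mathcal{L}}x\|^2$ for $0 \le s \le t$, whence $t\|e^{t\mathcal{L}}x\|^2 \le \int_0^t \|e^{s\mathcal{L}}x\|^2\,ds \le K\|x\|^2$; thus $\|e^{t\mathcal{L}}\|^2 \le K/t$, so $\|e^{t_0\mathcal{L}}\| < 1$ for some $t_0 > 0$, and the semigroup property promotes this to the claimed exponential decay $\|e^{t\mathcal{L}}\|_{\mathcal{L}(\mathcal{H})} \le C e^{-\omega t}$.
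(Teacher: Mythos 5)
Your proposal is correct, but note that the paper does not actually prove this lemma: it is quoted as a known frequency-domain theorem with a bare citation to Pr\"uss \cite{pruss} (the Gearhart--Pr\"uss--Huang theorem), and the paper's real work lies in verifying its two hypotheses for $\mathscr{A}_{kv}$. Your text therefore supplies what the paper outsources: a self-contained proof. The necessity direction via the Laplace-transform representation $R(\lambda)=\int_0^\infty e^{-\lambda t}e^{t\mathcal{L}}\,dt$ on $\Re\lambda>-\omega$ is standard and correct. The sufficiency direction is also correct in all its steps: the Neumann-series argument combined with the contraction bound $\|R(\lambda)\|\le 1/\Re\lambda$ gives a uniform resolvent bound $\tilde M$ on the closed right half-plane; the vector-valued Plancherel identity on vertical lines is legitimate because $\mathcal{H}$ is a Hilbert space; the resolvent identity $R(\eta+i\beta)=\bigl[I+(1-\eta)R(\eta+i\beta)\bigr]R(1+i\beta)$ correctly transfers the $L^2$ bound from the reference line $\Re\lambda=1$ to $\Re\lambda=\eta$ uniformly in $\eta$, and monotone convergence plus the contraction shortcut to Datko's theorem ($t\|e^{t\mathcal{L}}x\|^2\le\int_0^t\|e^{s\mathcal{L}}x\|^2\,ds$) finishes the argument. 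The single step you state without justification is the finiteness of $M=\sup_{\beta\in\R}\|R(i\beta)\|$: this requires combining \eqref{1.9kv} at infinity with the norm-continuity of $\beta\mapsto R(i\beta)$ on the compact part of the axis, which is available precisely because \eqref{1.8wkv} puts $i\R$ in the resolvent set; a one-line remark would close this. As for what each route buys: the paper's citation is economical for a classical result, while your proof makes explicit where contractivity enters (the a priori bound $1/\Re\lambda$, the reference-line estimate $\int_\R\|R(1+i\beta)x\|^2\,d\beta\le\pi\|x\|^2$, and the monotonicity used in the Datko step) and where the Hilbert-space structure is indispensable (Plancherel), which is exactly why this equivalence fails on general Banach spaces.
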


The proof of Theorem \ref{lrkv} is based on the following lemmas.

For the same reason as before (see remark \eqref{Non-Compactness}), we have to show that there is no eigenvalue lying on the imaginary axis
and that $\mathscr{A}_{kv}$ has no continuous spectrum on the imaginary axis.

We first we look at the point spectrum.

\begin{lemma}\label{condspkv}
If $\beta$ is a real number, then $i\beta$  is not an eigenvalue of $\mathscr{A}_{kv}$.
\end{lemma}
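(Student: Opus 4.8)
The plan is to mimic the proof of Lemma \ref{pointspectrum}, exploiting the stronger dissipativity estimate \eqref{dissipativekv} now available for the Kelvin--Voigt operator. Writing $Z=(u,v,w,z)^{T}\in\mathscr{D}(\mathscr{A}_{kv})$, the eigenvalue equation $\mathscr{A}_{kv}Z=i\beta Z$ is equivalent to the system
\begin{align*}
v &= i\beta u, \\
\Delta u + a\Delta v &= i\beta v, \\
-\frac{\partial u}{\partial \nu} - a\frac{\partial v}{\partial \nu} - \mu z(.,1) &= i\beta w, \\
-\frac{1}{\tau} z_{\rho} &= i\beta z.
\end{align*}

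First I would take the inner product of $\mathscr{A}_{kv}Z=i\beta Z$ with $Z$ in $\mathscr{H}$. Since $\langle \mathscr{A}_{kv}Z,Z\rangle_{\mathscr{H}}=i\beta\|Z\|_{\mathscr{H}}^{2}$ is purely imaginary, its real part vanishes; combined with the dissipativity inequality \eqref{dissipativekv} (read with real parts and moduli for complex-valued $V$, exactly as \eqref{1.7} is deduced from \eqref{dissipativeAd}), which under assumption \eqref{condex} gives $a\,C_{\Omega}(c)>0$ with $c=-\mu/a$, this forces $0=\Re\langle \mathscr{A}_{kv}Z,Z\rangle_{\mathscr{H}}\le -a\,C_{\Omega}(c)\|v\|_{2}^{2}\le 0$, hence $v=0$ in $\Omega$. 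This is the decisive step, and it reflects the essential feature of the Kelvin--Voigt damping stressed in the remark following the theorem: unlike the estimate for $\mathscr{A}_{d}$, where with $\xi=\mu\tau$ the boundary term $z(.,1)$ cancels exactly and yields no information, here the strong damping $-a\Delta v$ together with the trace term controls the full $L^{2}(\Omega)$-norm of $v$.

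Once $v=0$ is established, the remaining components collapse in turn. From the second equation $\Delta u=i\beta v-a\Delta v=0$, so $u$ is harmonic. The trace $w=\gamma_{1}(v)=0$, and the compatibility condition $z(.,0)=\gamma_{1}(v)$ gives $z(.,0)=0$; integrating the transport equation $z_{\rho}=-i\beta\tau z$ then yields $z(.,\rho)=z(.,0)\,e^{-i\beta\tau\rho}=0$ for all $\rho$, in particular $z(.,1)=0$. Substituting $v=0$, $w=0$, $z(.,1)=0$ into the third equation gives the homogeneous Neumann condition $\frac{\partial u}{\partial \nu}=0$ on $\Gamma_{1}$. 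Thus $u$ is harmonic in $\Omega$ with $u=0$ on $\Gamma_{0}$ and $\frac{\partial u}{\partial \nu}=0$ on $\Gamma_{1}$.

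Finally I would apply the energy identity: integrating by parts, $\int_{\Omega}|\nabla u|^{2}\,dx=\int_{\Gamma_{0}}u\frac{\partial u}{\partial \nu}\,d\sigma+\int_{\Gamma_{1}}u\frac{\partial u}{\partial \nu}\,d\sigma-\int_{\Omega}u\,\Delta u\,dx=0$, since every term on the right vanishes. Hence $\nabla u=0$, so $u$ is constant, and as $u=0$ on $\Gamma_{0}$ with $meas(\Gamma_{0})>0$ we conclude $u\equiv0$. Therefore $Z=0$, which shows that $i\beta$ is not an eigenvalue. I do not anticipate a genuine obstacle here; the only point requiring care is recognizing that, with the choice $\xi=\mu\tau$, the dissipativity estimate controls $v$ throughout $\Omega$ rather than merely $z(.,1)$ on the boundary, and this is precisely what makes the argument go through uniformly in $\beta$, including the case $\beta=0$.
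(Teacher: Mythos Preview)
Your proof is correct and follows the same strategy as the paper: take the inner product of the eigenvalue equation with $Z$, use the dissipativity estimate \eqref{dissipativekv} to force $v=0$, then cascade through the compatibility conditions and the transport equation to kill $w$ and $z$. Your treatment is in fact slightly more careful than the paper's, since you explicitly dispose of $u$ via the harmonic--mixed--boundary energy identity, which covers the case $\beta=0$ that the paper's argument (implicitly relying on $v=i\beta u$) glosses over.
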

\begin{proof} We
will show that the equation 
\begin{equation}\mathscr{A}_{kv} Z = i \beta Z 
\label{eigenkv} 
\end{equation}
with $Z= (u,v,w,z)^T \in \mathscr{D}(\mathscr{A}_{kv})$ and $\beta \in \mathbb{R}$ has only
the trivial solution.
System \eqref{eigenkv} writes:
\begin{eqnarray}
v &=& i \beta u \label{eigen1}\\
\Delta u+ a \,\Delta v &=& i \beta v \label{eigen2}\\
-\frac{\partial u}{\partial \nu } - a \frac{\partial v}{\partial \nu } - \mu z\left( .,1\right) &=& i \beta w \label{eigen3}\\
-\frac{1}{\tau }z_{\rho } &=& i \beta z \label{eigen4}
\end{eqnarray}
\noindent Denoting now $c = - \displaystyle \frac{\mu}{a}$, by taking the inner product of (\ref{eigenkv}) with $Z$, ,
using  the inequality \eqref{dissipativekv} we get: 
\begin{equation}\label{1.7kv}
\Re
\left(<\mathscr{A}_{kv} Z,Z>_{{\mathscr{H}}} \right)
\leq - a  \, C_{\Omega}(c)\Vert v\Vert_{2}^{2} \ .
\end{equation}
From assumption \eqref{condex}, $C_{\Omega}(c) > 0$ and thus we obtain that $v=0$. 

Next, 
we have $w = \gamma_1 (v)$, we also have $w = 0$. Moreover as we have $w = z(.,0)$, we get: $z(.,0) = 0$.

From \eqref{eigen3} we also have  $z(.,1) = 0$.

As $z$ satisfies  \eqref{eigen4}, we get the following  identity: 

$$
z(.,\rho) = e^{-i \beta \tau \rho} z(.,0).
$$
Thus the only solution of \eqref{eigenkv} is the trivial one.
\end{proof}

By the same way, as in Lemma \ref{resolventiR}, we show that $\mathscr{A}_{kv}$ has no continuous spectrum on the imaginary axis.

\begin{lemma}\label{resolventiRkv}
If $\lambda$ is a real number, then $i\lambda$ belongs to the resolvent set $\rho(\mathscr{A}_{kv})$ of $\mathscr{A}_{kv}$.
\end{lemma}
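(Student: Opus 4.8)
The plan is to follow the scheme of Lemma~\ref{resolventiR}: by Lemma~\ref{condspkv} it suffices to prove that $i\lambda I-\mathscr{A}_{kv}$ is surjective. Given $F=(f_1,f_2,f_3,f_4)^T\in\mathscr{H}$, I would look for $V=(u,v,w,z)^T\in\mathscr{D}(\mathscr{A}_{kv})$ solving $(i\lambda I-\mathscr{A}_{kv})V=F$, i.e.
\begin{eqnarray*}
i\lambda u-v&=&f_1,\\
i\lambda v-\Delta u-a\Delta v&=&f_2,\\
i\lambda w+\frac{\partial u}{\partial\nu}+a\frac{\partial v}{\partial\nu}+\mu z(.,1)&=&f_3,\\
i\lambda z+\frac{1}{\tau}z_\rho&=&f_4.
\end{eqnarray*}
Exactly as in Lemma~\ref{resolventiR}, the first equation gives $v=i\lambda u-f_1$, the compatibility condition \eqref{domainA3kv} fixes $z(.,0)=v|_{\Gamma_1}$, and the last equation is a linear Cauchy problem in $\rho$ whose explicit solution yields $z(.,1)=i\lambda u\,e^{-i\lambda\tau}+z_1$ on $\Gamma_1$, with $z_1\in L^2(\Gamma_1)$ a known function of $f_1,f_4$. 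The whole problem is thereby reduced to finding $u$.

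The genuinely new feature compared with Lemma~\ref{resolventiR} is the Kelvin--Voigt term, which I would handle by introducing the combined variable $\phi:=u+av=(1+ai\lambda)u-af_1$ (note $1+ai\lambda\neq0$ for every real $\lambda$). Since $\Delta u+a\Delta v=\Delta\phi$, the second equation becomes $-\Delta\phi-\tfrac{\lambda^2}{1+ai\lambda}\phi=g$ in $\Omega$ with $g\in L^2(\Omega)$ an explicit combination of $f_1,f_2$, and eliminating $w,z(.,1)$ from the third equation turns it into a Robin-type condition $\frac{\partial\phi}{\partial\nu}=\frac{\lambda^2-\mu i\lambda e^{-i\lambda\tau}}{1+ai\lambda}\,\phi+(\text{known})$ on $\Gamma_1$, with $\phi=0$ on $\Gamma_0$. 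In the weak form the terms carrying $\partial f_1/\partial\nu$ (coming from $-a\Delta f_1$ in the interior and from the boundary condition) cancel exactly, so the antilinear functional is bounded on $H^1_{\Gamma_0}(\Omega)$ for $F\in\mathscr{H}$, and the variational problem reads: find $\phi\in H^1_{\Gamma_0}(\Omega)$ with
\begin{equation*}
\int_\Omega\nabla\phi\cdot\nabla\bar\omega\,dx-\frac{\lambda^2}{1+ai\lambda}\int_\Omega\phi\bar\omega\,dx-\frac{\lambda^2-\mu i\lambda e^{-i\lambda\tau}}{1+ai\lambda}\int_{\Gamma_1}\phi\bar\omega\,d\sigma=L(\omega),\quad\forall\,\omega\in H^1_{\Gamma_0}(\Omega).
\end{equation*}

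The hard part will be the coercivity of this sesquilinear form $B$. Contrary to Lemma~\ref{resolventiR}, there is no shift by $\mu_1$ here, so the zeroth-order coefficient $-\tfrac{\lambda^2}{1+ai\lambda}$ is indefinite and a direct Lax--Milgram argument on $\Re B(\phi,\phi)$ fails whenever $\lambda^2$ is close to a Dirichlet--Robin eigenvalue of $-\Delta$. The decisive point is that the Kelvin--Voigt coefficient makes this shift complex: for $\lambda\neq0$ one has $\Im\!\big(\tfrac{\lambda^2}{1+ai\lambda}\big)=-\tfrac{a\lambda^3}{1+a^2\lambda^2}\neq0$, so $\tfrac{\lambda^2}{1+ai\lambda}$ is never a (real) eigenvalue. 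Since the gradient term carries the real coefficient $1$, I would establish a G\aa rding inequality $\Re B(\phi,\phi)\ge\tfrac12\|\nabla\phi\|_2^2-C_\lambda\|\phi\|_2^2$, the interior $L^2$-term and the boundary term $\int_{\Gamma_1}|\phi|^2$ being lower order by Poincar\'e, the trace theorem and Young's inequality. Because $H^1_{\Gamma_0}(\Omega)\hookrightarrow L^2(\Omega)$ and the trace into $L^2(\Gamma_1)$ are compact, $B$ is a compact perturbation of a coercive form, so the Fredholm alternative applies; injectivity is furnished by Lemma~\ref{condspkv}, since any $\phi$ solving the homogeneous problem reconstructs a $Z\in\mathscr{D}(\mathscr{A}_{kv})$ with $\mathscr{A}_{kv}Z=i\lambda Z$, hence $Z=0$. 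This yields a unique $\phi$. (For $\lambda=0$ the shift and the boundary coefficient vanish, $\phi$ solves a pure Neumann-type problem, and the form is directly coercive by Poincar\'e, so no special argument is needed.)

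It remains to return to the original unknowns. As $g\in L^2(\Omega)$, elliptic regularity gives $\phi\in H^2(\Omega)\cap H^1_{\Gamma_0}(\Omega)$, hence $\frac{\partial\phi}{\partial\nu}=\frac{\partial u}{\partial\nu}+a\frac{\partial v}{\partial\nu}\in L^2(\Gamma_1)$, which is precisely the extra regularity \eqref{domainA1kv} demanded by $\mathscr{D}(\mathscr{A}_{kv})$. Setting $u=(\phi+af_1)/(1+ai\lambda)\in H^1_{\Gamma_0}(\Omega)$, $v=i\lambda u-f_1\in H^1_{\Gamma_0}(\Omega)$, recovering $z$ from the explicit solution of its Cauchy problem and $w=\gamma_1(v)$, one checks that $V=(u,v,w,z)^T\in\mathscr{D}(\mathscr{A}_{kv})$ solves $(i\lambda I-\mathscr{A}_{kv})V=F$. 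Thus $i\lambda I-\mathscr{A}_{kv}$ is surjective, and combined with Lemma~\ref{condspkv} this proves $i\lambda\in\rho(\mathscr{A}_{kv})$.
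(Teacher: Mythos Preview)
Your argument is correct and substantially more detailed than the paper's, which for this lemma consists solely of the sentence ``By the same way, as in Lemma~\ref{resolventiR}, we show that $\mathscr{A}_{kv}$ has no continuous spectrum on the imaginary axis.'' Your introduction of $\phi=u+av$ is the natural device for Kelvin--Voigt problems and cleanly avoids the $\Delta f_1$ term that would otherwise appear. More importantly, you are right that the argument of Lemma~\ref{resolventiR} does not transcribe verbatim: there the shift by $\mu_1$ in $\mathscr{A}_d=\mathscr{A}-\mu_1 I$ is precisely what makes the sesquilinear form coercive after multiplication by $-i\beta+\mu_1$, and no such shift is available here. Replacing the direct Lax--Milgram step by a G{\aa}rding inequality together with the Fredholm alternative (injectivity being supplied by Lemma~\ref{condspkv}) is the standard and correct remedy, and constitutes a genuine addition to what the paper actually writes.

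One caveat worth flagging: the paper's stated domain $\mathscr{D}(\mathscr{A}_{kv})$ requires $u\in H^2(\Omega)$ and $\partial_\nu v\in L^2(\Gamma_1)$ separately, whereas your construction yields $\phi=u+av\in H^2(\Omega)$ but only $u\in H^1_{\Gamma_0}(\Omega)$, since $f_1$ need not lie in $H^2$. This is a defect of the paper's domain description rather than of your proof: for $\mathscr{A}_{kv}V\in\mathscr{H}$ one actually needs $\Delta(u+av)\in L^2(\Omega)$, which your $\phi\in H^2$ provides, and with the domain as literally written surjectivity onto $\mathscr{H}$ would in fact fail for generic $f_1\in H^1_{\Gamma_0}(\Omega)\setminus H^2(\Omega)$. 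Your regularity conclusion is the right one for the natural domain of $\mathscr{A}_{kv}$.
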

\begin{lemma}\label{lemresolventkv}
The resolvent operator of $\mathscr{A}_{kv}$ satisfies condition \eqref{1.9kv}.
\end{lemma}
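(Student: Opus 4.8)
The plan is to adapt the contradiction argument of Lemma \ref{lemresolvent}, but to exploit the much stronger dissipation produced by the Kelvin--Voigt term, which makes the weight $\beta_n^2$ unnecessary and yields the \emph{uniform} (unweighted) bound \eqref{1.9kv}. Suppose \eqref{1.9kv} is false. By the Banach--Steinhaus theorem, and since $i\R\subset\rho(\mathscr{A}_{kv})$ by Lemma \ref{resolventiRkv}, there exist a sequence $\beta_n\to+\infty$ and unit vectors $Z_n=(u_n,v_n,w_n,z_n)^T\in\mathscr{D}(\mathscr{A}_{kv})$, $\|Z_n\|_{\mathscr{H}}=1$, with $(i\beta_n I-\mathscr{A}_{kv})Z_n\to0$ in $\mathscr{H}$. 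Componentwise (the analogue of \eqref{1.13}--\eqref{1.14}, without the $\beta_n$ prefactor and with the Kelvin--Voigt terms) this reads
\begin{align*}
i\beta_n u_n-v_n &\equiv f_n\to0 && \text{in } H^1_{\Gamma_0}(\Omega),\\
i\beta_n v_n-\Delta u_n-a\Delta v_n &\equiv g_n\to0 && \text{in } L^2(\Omega),\\
i\beta_n w_n+\frac{\partial u_n}{\partial\nu}+a\frac{\partial v_n}{\partial\nu}+\mu z_n(.,1) &\equiv h_n\to0 && \text{in } L^2(\Gamma_1),\\
i\beta_n z_n+\frac{1}{\tau}\partial_\rho z_n &\equiv k_n\to0 && \text{in } L^2(\Gamma_1\times(0,1)).
\end{align*}
The goal is to show that $\|Z_n\|_{\mathscr{H}}\to0$, which contradicts $\|Z_n\|_{\mathscr{H}}=1$.

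The decisive first step is to take the real part of $\langle(i\beta_n I-\mathscr{A}_{kv})Z_n,Z_n\rangle_{\mathscr{H}}$. Since $\Re\langle i\beta_n Z_n,Z_n\rangle_{\mathscr{H}}=0$, this quantity equals $-\Re\langle\mathscr{A}_{kv}Z_n,Z_n\rangle_{\mathscr{H}}$ and tends to $0$. Using the dissipation identity behind \eqref{dissipativekv} (with the calibration $\xi=\mu\tau$, for which the boundary delay contribution in $z_n(.,1)$ is absorbed via Young's inequality), I obtain, with $c=-\mu/a$,
\begin{equation*}
a\Big(\|\nabla v_n\|_2^2+c\,\|v_n\|_{2,\Gamma_1}^2\Big)\le-\Re\big\langle\mathscr{A}_{kv}Z_n,Z_n\big\rangle_{\mathscr{H}}\longrightarrow0.
\end{equation*}
By assumption \eqref{condex} we have $C_{\Omega}(c)>0$, so by \eqref{Cbeta} the left--hand form is nonnegative and dominates $a\,C_{\Omega}(c)\|v_n\|_2^2$; hence both $\|v_n\|_2\to0$ and $\|\nabla v_n\|_2^2+c\|v_n\|_{2,\Gamma_1}^2\to0$. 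Combining the latter with an Ehrling (trace--interpolation) inequality $\|v_n\|_{2,\Gamma_1}^2\le\varepsilon\|\nabla v_n\|_2^2+C_\varepsilon\|v_n\|_2^2$ and choosing $\varepsilon$ small, I upgrade this to $\nabla v_n\to0$ in $L^2(\Omega)$, so that $v_n\to0$ in $H^1_{\Gamma_0}(\Omega)$.

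Once $v_n\to0$ in $H^1_{\Gamma_0}(\Omega)$, the remaining components follow without using the second and third equations. From the first equation $i\beta_n u_n=v_n+f_n\to0$ in $H^1_{\Gamma_0}(\Omega)$, and since $\beta_n\to+\infty$ this forces $u_n\to0$ in $H^1_{\Gamma_0}(\Omega)$. The trace theorem then gives $w_n=\gamma_1(v_n)\to0$ in $L^2(\Gamma_1)$, and the compatibility condition \eqref{domainA3kv} yields $z_n(.,0)=w_n\to0$. Finally, solving the transport equation (fourth line) as
\begin{equation*}
z_n(.,\rho)=e^{-i\beta_n\tau\rho}\,z_n(.,0)+\tau\int_0^\rho e^{-i\beta_n\tau(\rho-s)}\,k_n(.,s)\,ds,
\end{equation*}
and estimating in $L^2(\Gamma_1\times(0,1))$ via Minkowski's and Cauchy--Schwarz's inequalities, I get $\|z_n\|_{L^2(\Gamma_1\times(0,1))}\le C\big(\|z_n(.,0)\|_{2,\Gamma_1}+\|k_n\|_{L^2(\Gamma_1\times(0,1))}\big)\to0$. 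Thus every component of $Z_n$ tends to zero, the desired contradiction; this proves \eqref{1.9kv}, and together with Lemma \ref{resolventiRkv} and Lemma \ref{lemraokv} establishes Theorem \ref{lrkv}.

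The hard part is precisely this first step: extracting strong ($H^1$, not merely $L^2$) control of the velocity $v_n$ from the dissipation. This is exactly where the Kelvin--Voigt structure is essential --- unlike the interior damping $a\,u_t$ of Section 3, whose dissipation \eqref{1.7} controls only $\|v_n\|_2$ and therefore forced the weaker, polynomial conclusion --- and where the hypothesis \eqref{condex} (equivalently $C_\Omega(-\mu/a)>0$) and the choice $\xi=\mu\tau$ are used to keep the Dirichlet--Robin quadratic form coercive while neutralizing the destabilizing delay term $z_n(.,1)$.
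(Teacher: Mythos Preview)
Your argument is correct and follows the same contradiction scheme as the paper (Banach--Steinhaus, dissipation estimate, transport formula for $z_n$), but the way you extract $v_n\to0$ in $H^1_{\Gamma_0}(\Omega)$ is genuinely different and, in fact, cleaner. The paper uses only the consequence \eqref{1.7kv}, obtaining first $v_n\to0$ in $L^2(\Omega)$, then asserts that $\Delta u_n\to0$ and $\Delta v_n\to0$ in $L^2(\Omega)$ (a step that is not justified there), and deduces the $H^1$ convergence of $u_n$ and $v_n$ from this together with $u_n,v_n\to0$ in $L^2$. You instead go back to the sharper dissipation inequality that precedes \eqref{dissipativekv}, keeping the full Dirichlet--Robin form $\|\nabla v_n\|_2^2+c\,\|v_n\|_{2,\Gamma_1}^2$, and then use an Ehrling/trace--interpolation inequality to disentangle $\|\nabla v_n\|_2$ from the negative boundary term once $\|v_n\|_2\to0$ is known. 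This bypasses the second and third resolvent equations entirely for the $H^1$ step and makes the role of the hypothesis \eqref{condex} (i.e.\ $C_\Omega(-\mu/a)>0$) completely explicit. The remainder of your proof --- $u_n\to0$ in $H^1$ from the first equation and $\beta_n\to\infty$, $w_n=\gamma_1(v_n)\to0$, $z_n(\cdot,0)\to0$, and the integral representation for $z_n$ --- matches the paper line for line.
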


\begin{proof}
Suppose that condition \eqref{1.9kv} is false. 
By the Banach-Steinhaus Theorem (see \cite{brezis}), there exists a sequence of real numbers $\beta_n \rightarrow +\infty$ and a sequence of vectors
$Z_n= (u_{n},v_{n},w_n,z_n )^T \in \mathscr{D}(\mathscr{A}_{kv})$ with $\|Z_n\|_{\mathscr{H}} = 1$ such that 
\begin{equation}
|| (i \beta_n I - \mathscr{A}_{kv})Z_n||_{\mathscr{H}} \rightarrow 0\;\;\;\; \mbox{as}\;\;\;n\rightarrow \infty, 
\label{1.12kv} \end{equation}
i.e., 
\begin{equation}\left(i \beta_n u_{n} - v_{n}\right) \equiv f_{n}\rightarrow 0 \;\;\; \mbox{in}\;\; H^1_{\Gamma_0} (\Omega), 
\label{1.13kv}\end{equation}
 \begin{equation}
  \left( i \beta_n
v_{n} - \Delta u_n  - a \Delta v_n \right) \equiv g_{n} \rightarrow 0 \;\;\;
\mbox{in}\;\; L^2(\Omega),
\label{1.13bkv} \end{equation}
 \begin{equation}
 \left( i \beta_n w_{n} + \frac{\partial u_n}{\partial \nu} + a \frac{\partial v_n}{\partial \nu} + \mu z_{n}(.,1) \right) \equiv h_{n} \rightarrow 0 \;\;\;
\mbox{in}\;\; L^2(\Gamma_1), 
\label{1.14bkv} \end{equation}
\begin{equation}\label{1.14kv} 
\left( i\beta_n  z_n + \frac{1}{\tau} \partial_\rho z_n\right) \equiv k_n \rightarrow 0 \; \; \; \mbox{in} \;\; L^2(\Gamma_1 \times (0,1)).
\end{equation}

Our goal is to derive from \eqref{1.12kv} that $||Z_n||_{\mathscr{H}}$ converges to zero, thus there is a contradiction. 

We first notice that we have
\begin{equation}
|| (i \beta_n I - \mathscr{A}_{kv})Z_n||_{\mathscr{H}} \ge |\Re \left(\langle (i\beta_n I - \mathscr{A}_{kv})Z_n, Z_n\rangle_{\mathscr{H}} \right)|. 
\label{1.15kv}
\end{equation}
Thus by \eqref{1.7kv} and \eqref{1.12kv}, 
$$v_{n} \rightarrow 0  \mbox{ in } L^{2}(\Omega) \ . $$
From \eqref{1.13kv}, 
$$u_n \rightarrow 0 \mbox{ in }  L^{2}(\Omega) \ .$$
But we also have, 
\begin{equation*} \label{unkv}
\Delta u_n \rightarrow 0 \mbox{ in }  L^{2}(\Omega) \ 
 \mbox { and }
\Delta  v_n \rightarrow 0 \mbox{ in }  L^{2}(\Omega) \ .
\end{equation*}
Thus we firstly obtain: 
\begin{equation} \label{unvnkv}
u_n \rightarrow 0 \mbox{ in } H^1_{\Gamma_0} (\Omega) \ 
 \mbox { and }
v_n \rightarrow 0 \mbox{ in }  H^1_{\Gamma_0} (\Omega) \ .
\end{equation}
By  the trace theorem, we have:
\begin{equation} \label{wnkv}
w_n = \gamma(v_{n})\rightarrow 0 \mbox{ in }L^2(\Gamma_1) \ .
\end{equation}
Moreover, since $Z_n\in{\mathcal D}(\mathscr{A}_{kv})$, $w_{n} = z_n (.,0)$. Thus we get:
\begin{equation}\label{z0kv}
z_n (.,0) \rightarrow 0 \mbox{ in }\;\; L^2(\Gamma_1) \ .
\end{equation}
Now from \eqref{1.14bkv}, we also have:
\begin{equation}\label{z1kv}
z_n (.,1) \rightarrow 0 \mbox{ in }\;\; L^2(\Gamma_1) \ .
\end{equation}
As we have the following identity:
$$
z_n(.,\rho) = z_n (.,0) \, e^{- i \tau \beta_n \rho} + \tau \, \int_0^\rho e^{- i \tau\beta_n (\rho - s)}k_n (.,s) \, ds
$$
according to \eqref{1.14kv} \eqref{z0kv} we finally have:
\begin{equation} \label{znkv}
z_n \rightarrow 0 \; \; \mbox{in}\;\; L^2(\Gamma_1 \times (0,1)) \ .
\end{equation}
Identities \eqref{unvnkv},\eqref{wnkv} and \eqref{znkv} clearly contradicts the fact that:
$$ \forall \ n  \in \N \,,\, \left\|Z_n\right\|_{\mathscr{H}}=1 \ . $$
\end{proof}
The two hypotheses of Lemma \ref{lemraokv} are proved. The proof of Theorem \ref{lrkv} is then finished.
\end{proof}
\section{Comments and numerical illustrations}
To illustrate numerically the results presented in this paper, we present numerical simulations for 
problem \eqref{ondes} and for the Kelvin-Voigt damping, namely problem \eqref{ondeskv}, in 1D.
So let us consider $\Omega = (0,1) \,,\, \Gamma_{0} = \{0\}, \Gamma_{1} = \{1\}$.

To solve numerically problem \eqref{ondes} (resp. problem \eqref{ondeskv}), we have to consider its equivalent formulation, namely problem \eqref{wave2}
(resp. problem \eqref{wave2kv}), which writes in the present case:
\begin{equation}
\left\{
\begin{array}{ll}
u_{tt}-u_{xx} + a \, u_{t} =0, & x\in (0,1) ,\ t>0 \,,\,\\[0.1cm]
u(0,t)=0, &  t>0 \,,\,\\[0.1cm]
u_{tt}(1,t)=-  u_{x}(1,t) - \mu z(x,1,t), &  t>0 \,,\,\\[0.1cm]
\tau z_{t}(1,\rho ,t)+z_{\rho }(1,\rho ,t)=0, & \rho \in(0,1)\,,\,t>0 \,,\\[0.1cm]
z(1,0,t)=u_{t}(1,t) &  t>0 \,,\\[0.1cm]
u(x,0)=u_{0}(x) & x\in (0,1)\,,\\
u_{t}(x,0)=u_{1}(x) & x\in (0,1) \,,\\
z(1,\rho ,0)=f_{0}(1,-\tau \rho ) & \rho \in (0,1) \ .
\end{array}%
\right. 
 \label{wave1D}
\end{equation}
A 1D formulation of the ``shifted'' problem \eqref{wave2d} as well as the Kelvin-Voigt damping problem, problem \eqref{wave2kv} is
of the same type. 

As the stability result that we have presented in this work, namely Theorem \ref{lr},
is a stability result  for the ``shifted'' problem  \eqref{wave2d}, we have to perform numerical simulations for both problems: 
the original one, problem \eqref{ondes} and the ``shifted'' problem  \eqref{wave2d}.

For this sake, to avoid a CFL condition between the mesh size and the time step, we decided to discretize the different problems 
by implicit first order in time, and finite difference method in space.
For every simulations the numerical parameters are the following:
$$
\begin{array}{ll} \tau = 2 \,,\, \xi = 2 \xi^{\star}\,,\, \Delta x = \frac{1}{20}\,,\, \Delta \rho = \frac{1}{20} \,,\, \Delta t = 0.1\\
u_{0}(x) = u_{1}(x) = x e^{10 x} \,,\,  f_{0} (1,\rho)= e^{\rho} e^{10} \ .
\end{array}
$$
For every time $t>0$, we denote $E(t) = \left\Vert \big(u(.,t),u_{t}(.,t),u_{t}(1,t),z(1,.,t)\big)^{T}\right\Vert_{\mathscr{H}}$.
The choice of $u_{0}\,,u_{1},f_{0}$ ensures a large initial energy.

In Figure \ref{wave1mu} and Figure \ref{wave1a}, we present the resulting simulation for the original problem and the ``shifted'' one.
\begin{figure}[H]
\subfigure[Original problem \label{origmu}]{\includegraphics[scale=0.4]{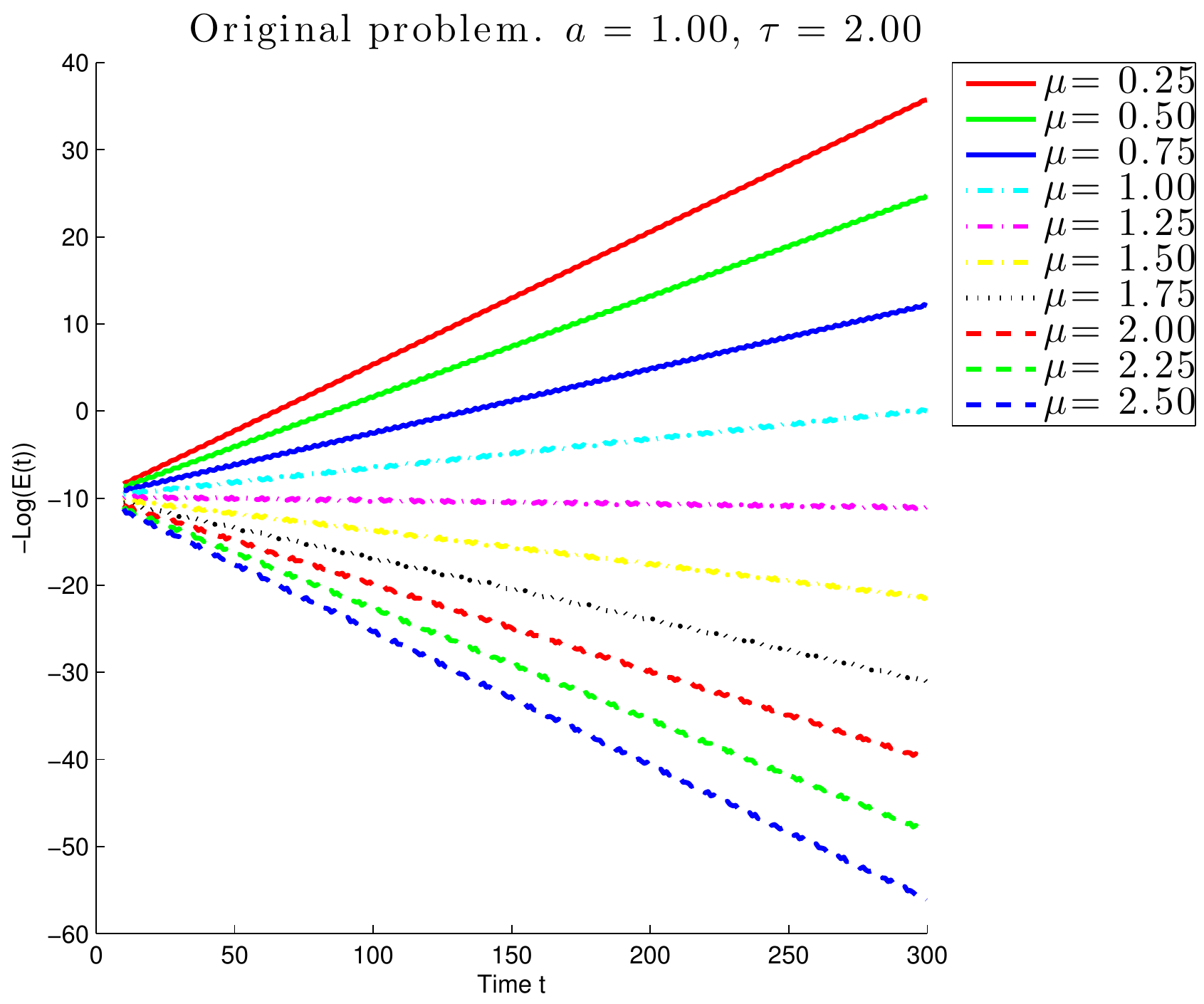}}
\subfigure[Shifted problem \label{shiftedmu}]{\includegraphics[scale=0.4]{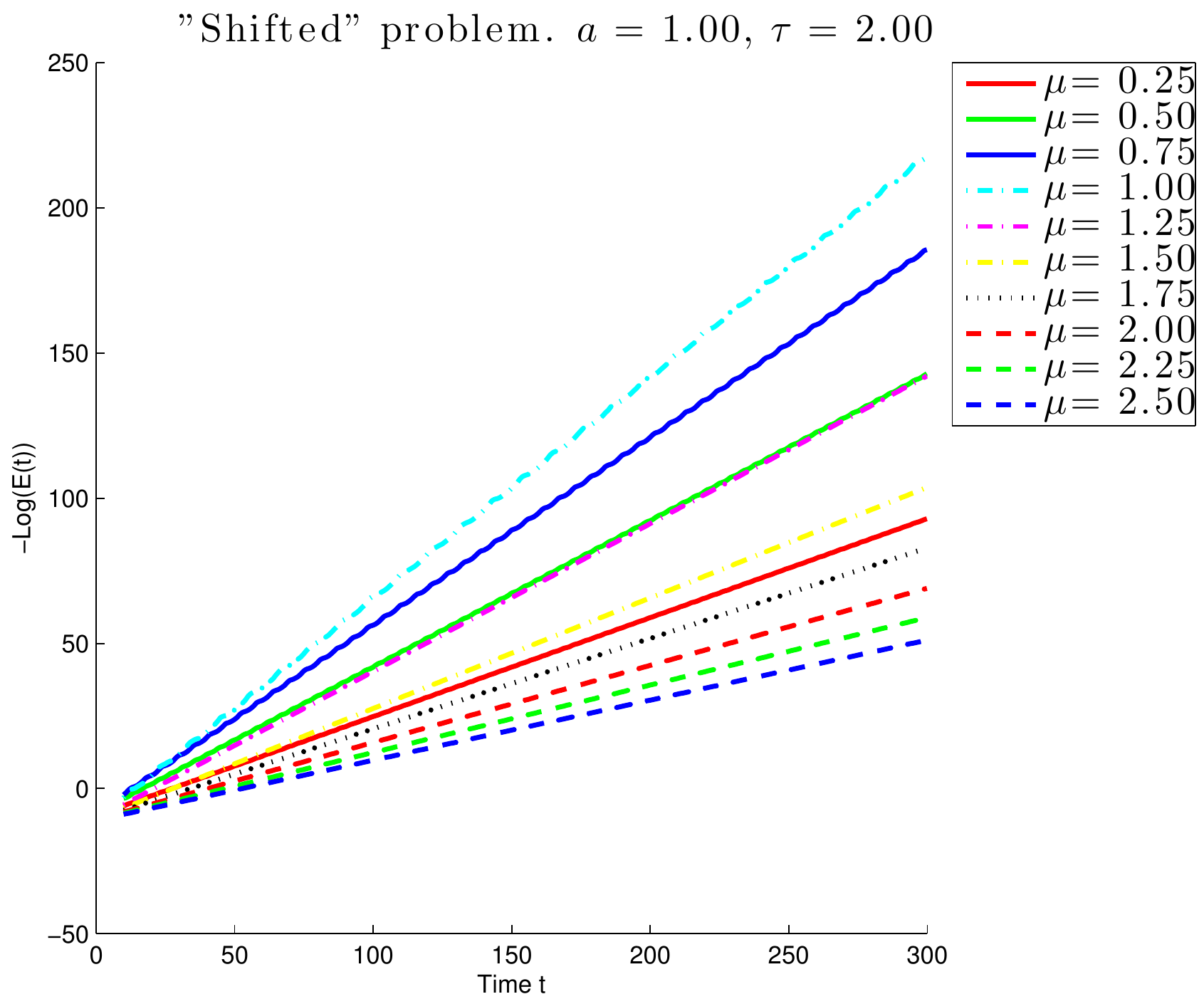}}
\caption{Energy (in -log scale) versus time: influence of $\mu$. \label{wave1mu}}
\end{figure}
\begin{figure}[H]
\subfigure[Original problem \label{origa}]{\includegraphics[scale=0.4]{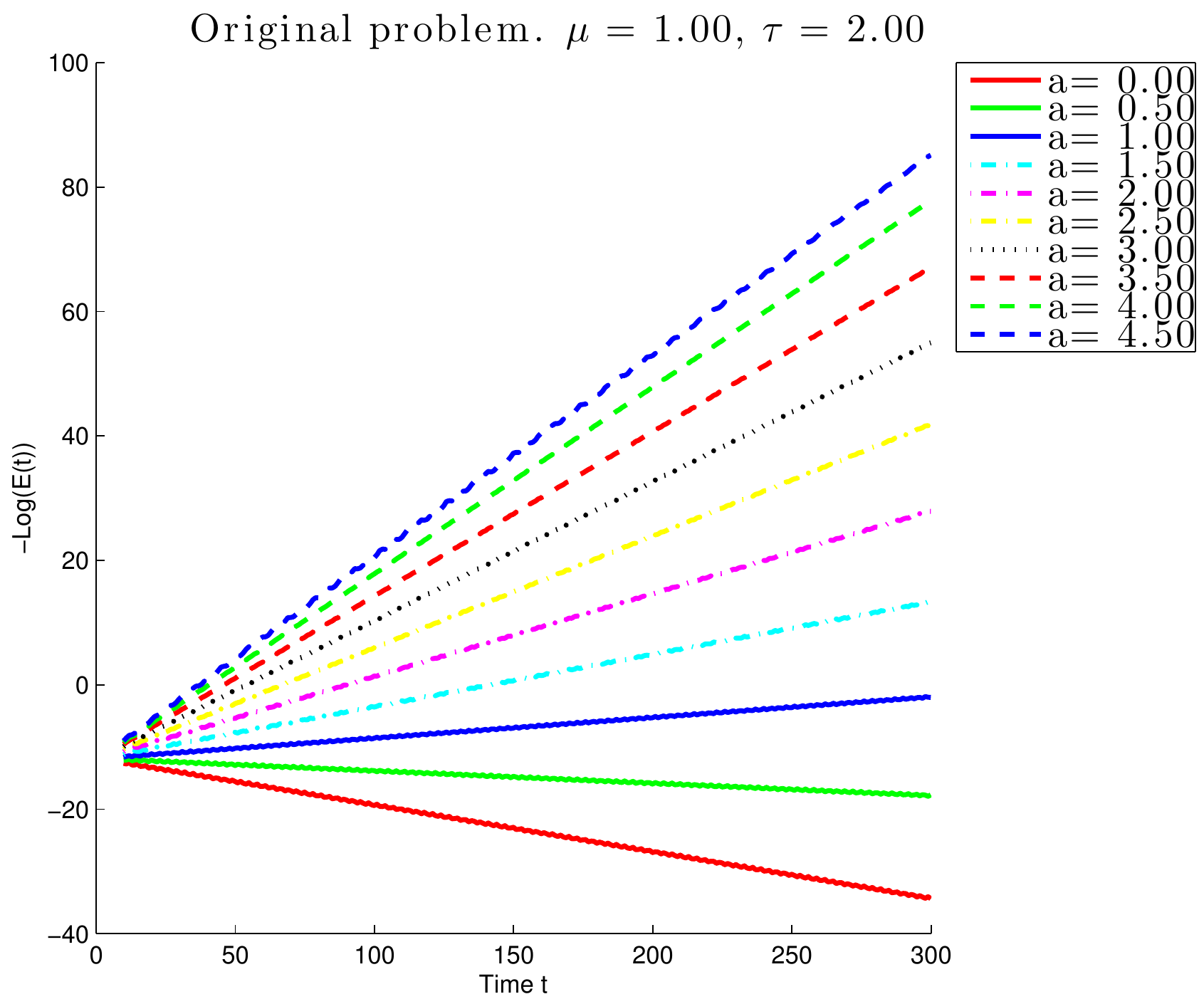}}
\subfigure[Shifted problem \label{shifteda}]{\includegraphics[scale=0.4]{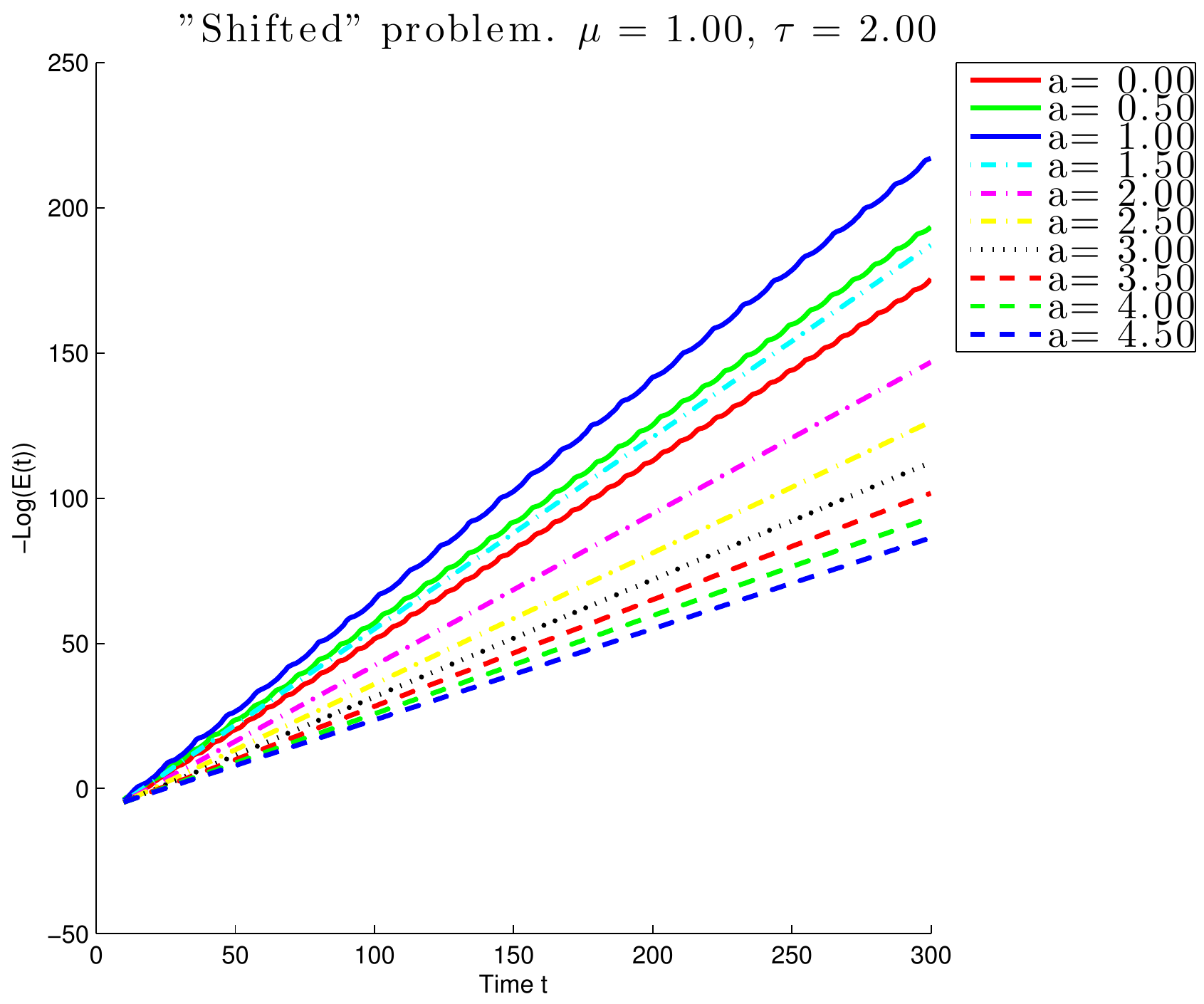}}
\caption{Energy (in -log scale) versus time: influence of $a$. \label{wave1a}}
\end{figure}
Let us first notice that the convergence rate for the shifted problem is better than the one expected: we have proved a polynomial decay rate
whereas numerically, we observe an exponential decay rate. 
This is probably due two facts :
\begin{enumerate}
\item the particular case of the dimension 1 as proved recently by 
G.Q. Xu, S. P. Yung and L. Kwan Li \cite{chinois-2006}.
\item the numerical diffusion participates to the exponential stability as for the Kelvin-Voigt damping.
\end{enumerate}

Moreover, as it was conjectured in Remark \ref{stabAd}, the ``shifted'' problem converges for a large set of parameters $a \mbox{ and }\mu$ whereas the original
problem does not and even worse, it exhibits an exponential growth.

In Figure \ref{kelvin}, we present the simulations for the case of the Kelvin-Voigt damping for which we have proved that under the condition
$ \mu < |c^{\star}| a $, we have an exponential decay rate. 

From equations \eqref{eigen-dirichlet-robin} and \eqref{defcstar},  the constant $c^{\star}$ must satisfy:
\begin{equation*}
\left\{\begin{array}{ll}
u_{xx} = 0\,,&  x \in (0,1), \\ 
u(0)=0\,,&   u_{x}(1) + c^{\star} u(1) = 0  \ .
\end{array}
\right.
\end{equation*}
Thus we obtain $c^{\star} = -1$. Let us first notice that even though the condition between $a$ and $\mu$ is not fulfilled,
we have an exponential decay of the solution. This is also probably due to the particular
case of the dimension 1 as well. Secondly it seems that numerically the convergence rate does not depend on the parameter $\mu$.
\begin{figure}[H]
\subfigure[Influence of $a$ \label{kelvina}]{\includegraphics[scale=0.4]{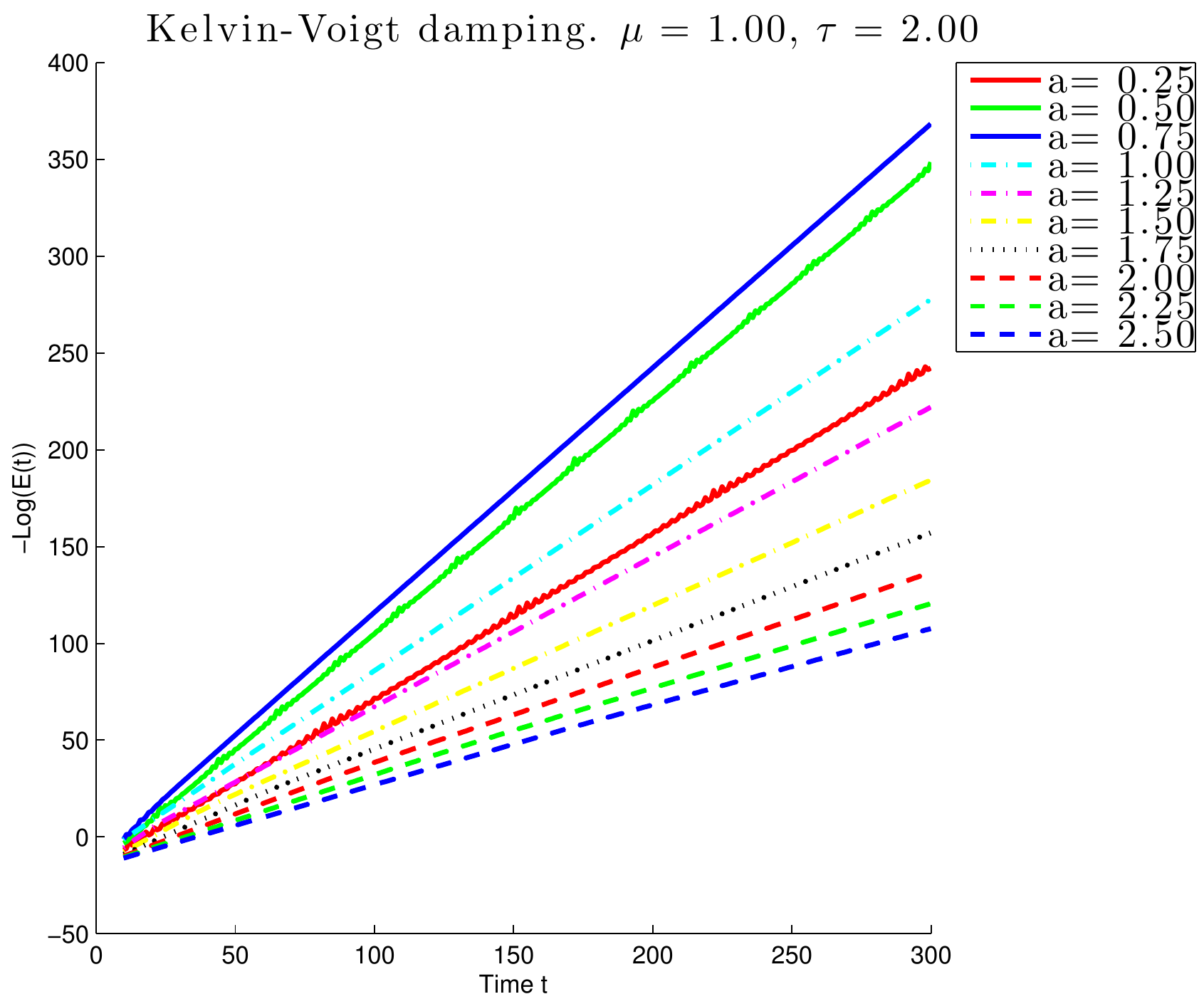}}
\subfigure[Influence of $\mu$ \label{kelvinmu}]{\includegraphics[scale=0.4]{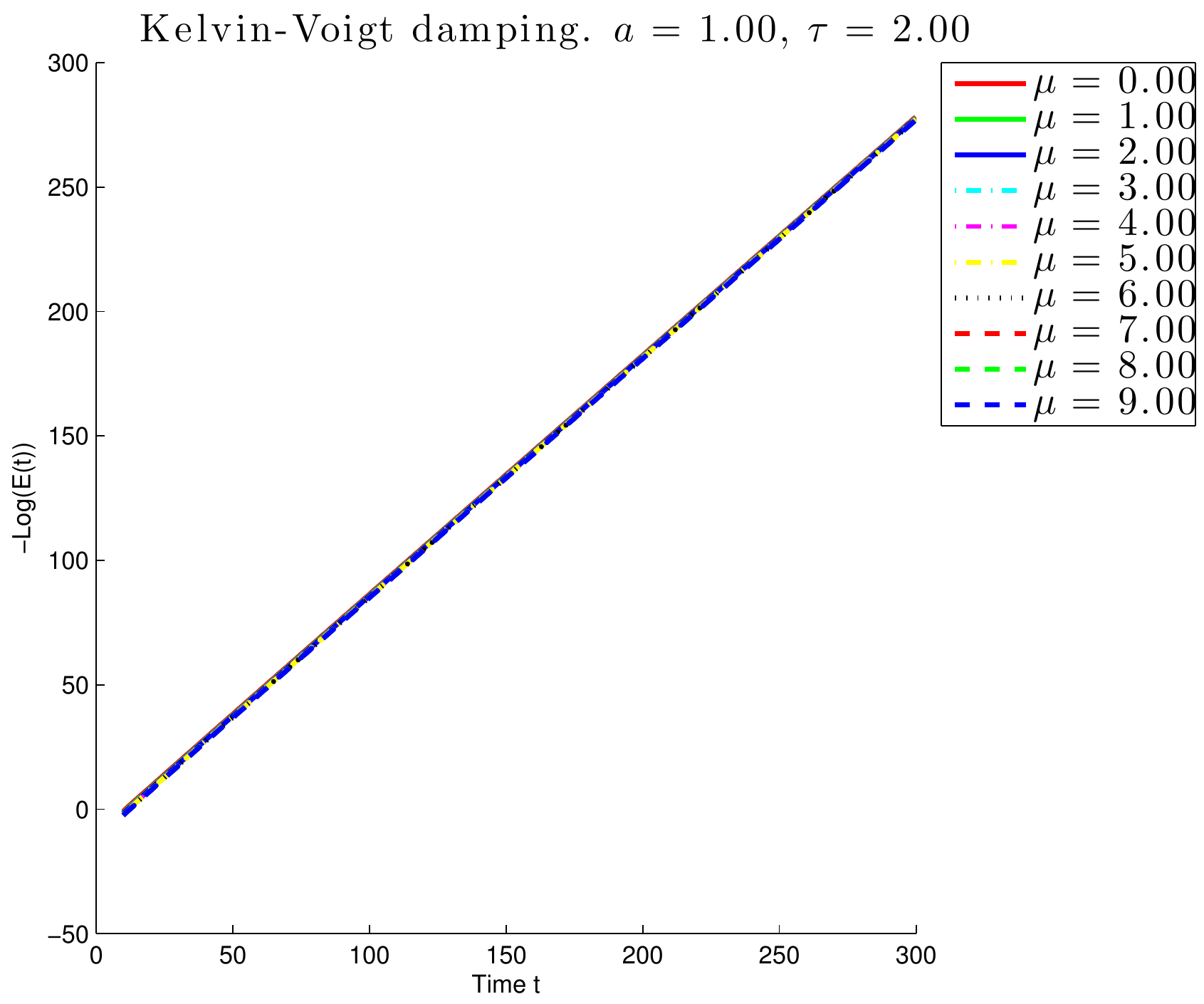}}
\caption{Energy (in -log scale) versus time. \label{kelvin}}
\end{figure}
\begin{ack} \rm 
The authors wish to thank R\'{e}gion Rh\^one-Alpes for the financial support COOPERA, CMIRA 2013.
\end{ack}
\bibliographystyle{siam}

\end{document}